\newcommand{\ra}{\rightarrow}
\newcommand{\ot}{\otimes}
\newcommand{\mtc}{\mathcal}
\newcommand{\lam}{\lambda}
\newcommand{\Lam}{\Lambda}
\newcommand{\al}{\alpha}
\newcommand{\eps}{\epsilon}
\newcommand{\sub}{\subsection}
\newcommand{\mc}{\mathcal}
\newcommand{\D}{\Delta}
\newcommand{\teta}{\theta}
\newcommand{\ul}{\underline}
\newcommand{\mr}{\mathrm}
\newcommand{\rh}{\rightharpoonup}
\newcommand{\lh}{\leftharpoonup}
\numberwithin{equation}{section}
\newtheorem{lemma}[equation]{Lemma}
\newtheorem{thm}[equation]{Theorem}
\newtheorem{prop}[equation]{Proposition}
\newtheorem{cor}[equation]{Corollary}
\newtheorem{rem}[equation]{Remark}
\newtheorem{example}[equation]{Example}
\newcommand{\AMSclasifR}{16W35, \;16W40}
\newcommand{\ch}{\chi}
\newcommand{\mtr}{\mathrm}
\newcommand{\bn}{\begin}
\title[Drinfeld Doubles]
{A class of quantum doubles which are ribbon algebras}
\author{Sebastian  Burciu}
\address{Inst.\ of Math.\ ``Simion Stoilow" of the Romanian Academy
\\ P.O. Box 1-764, RO-014700, Bucharest, Romania, smburciu@syr.edu}
\date{September 17, 2007}
\begin{document}
\thanks{MSC (2000): \AMSclasifR}\thanks{The research was supported by CEx05-D11-11/04.10.05.}
\begin{abstract} Andruskiewitsch and Schneider classify a large class of pointed Hopf algebras with abelian coradical. The quantum double of each such Hopf algebra is investigated. The quantum doubles of a family of Hopf algebras from the above classification are ribbon Hopf algebras.
\end{abstract}
\maketitle

\section*{Introduction}

Quasitriangular Hopf algebras have an universal $R$-matrix which is a solution of the Yang-Baxter equation and their modules can be used to determine quasi-invariants of braids, knots and links. Drinfeld's quantum double construction gives a method to produce a quasitriangular Hopf algebra from a Hopf algebra and its dual.

The concept of ribbon categories was introduced by Joyal and Street. Their definition requires the notion of duality and provides isotopy invariants of framed links. Through their representations, ribbon Hopf algebras give rise to ribbon categories. They were introduced by Turaev and Reshetikhin in \cite{RYT} who also showed that the quantum groups of Drinfeld and Jimbo are ribbon algebras. A ribbon Hopf algebra is a quasitriangular Hopf algebra which possesses an invertible central element known as the ribbon element.

Kauffman and Radford \cite{RK} have shown that the Drinfeld double $D(A_l)$ of a Taft algebra $A_l$ (of dimension $l^2$) has a ribbon element if and only if $l$ is odd. The ribbon element of $D(A_l)$ for $l$ odd, provides an important invariant of $3$-manifolds (see \cite{22}). In \cite{RK} the authors also gave a criterion for a general quantum double to possess a ribbon element. Benkart and Witherspoon investigated the structure of two parameter quantum groups of $sl_n$ and $gl_n$ \cite{2wqd}. In \cite{2wrestr} they have shown that the restricted two parameter quantum groups $u_{r,\;s}(sl_n)$ are quantum doubles of certain pointed Hopf algebras and possess ribbon elements under certain compatibility conditions between the parameters $r$ and $s$.

In this paper we provide a new class of quantum doubles which possess ribbon elements. They are the quantum doubles of a family of pointed Hopf algebras constructed by Andruskiewitsch and Schneider in \cite{andr}. The pointed Hopf algebras from \cite{andr} are liftings of
Radford's biproducts of Nichols algebras with group algebras.
The Radford biproducts are their associated graded algebras with respect
to the coradical filtration.
Andruskiewitsch and Schneider \cite{andr} showed that, under some restrictions
on the group order, all finite dimensional pointed Hopf
algebras having an abelian group of grouplike elements are this type of liftings.
The definition by generators and relations of these pointed Hopf algebras is very similar to that
of quantum groups and it includes Lusztig's small quantum groups.

If $G$ is an abelian finite group and $V$ a Yetter-Drinfeld module over $kG$ with a braiding of finite Cartan type ( see \cite{AS3} ) then let $A=kG\#B(V)$, where $B(V)$ is the Nichols algebra of $V$. We show that $D(A)$ is a ribbon Hopf algebra.

In Section \ref{Halg} we present the construction of the finite dimensional pointed Hopf algebras with abelian coradical constructed in \cite{andr}.

In  Section  \ref{dual} the dual Hopf algebra of such a pointed Hopf algebra is investigated. If there are no linking relations it is shown that the dual Hopf algebra contains a subalgebra isomorphic to a Nichols algebra. A pointed Hopf algebra whose root vectors are nilpotent is called a Hopf algebra of $nilpotent\; type$. In the situation of a Hopf algebra of nilpotent type and no linking relations the structure of the dual algebra is completely determined in this section. This recovers a result from \cite{MBe}. 
If the Hopf algebra is not of nilpotent type in the above sense, then its dual might not be anymore pointed and/or of nilpotent type. It will be interesting to completely determine the Hopf structure of the dual Hopf algebra in this situation. This would give new examples of Hopf algebras similar to that determined for rank one by \cite{krop}.

Section \ref{qdouble} investigates the algebra structure of the quantum double of a pointed Hopf algebra from Andruskiewitsch and Schneider's classification when there are no linking relations. In the nilpotent type situation, namely $A=kG\#B(V)$ the quantum double structure of $D(A)$ is completely determined. They have the same defining relations as the restricted two parameters quantum groups but with more grouplike elements. As an example, it is shown that for certain abelian coradical groups, the quantum double is indeed a quotient of a two parameter quantum group being isomorphic to the restricted two parameter quantum groups. This can be regarded as a generalization of the fact that a the quantum double of a Taft algebra is a a quotient of $u_{q,\;q^{-1}}(sl_2)$.

In Section \ref{braided} some notions about Hopf algebras in braided category are reminded. The integrals and distinguished grouplike elements of the bosonization algebra are given. In \cite{RK} the authors gave a criterion to decide  when a quantum double is a ribbon Hopf algebra. Using this criterion a sufficient condition for the quantum doubles of byproduct Hopf algebras to be ribbon is given.

Section \ref{Qdwr} describes the integrals and the distinguished grouplike elements for the class of Hopf algebras of nilpotent type with no linking relations, as well as for their dual Hopf algebras. It is shown that the quantum doubles corresponding to the pointed Hopf algebras of the form $B(V)\#kG$ where $V \in ^G_G\mtc{YD}$ are ribbon algebras.

The Appendix, contains some quantum binomial formulae taken from \cite{AS2} and a crucial lemma that is used in Section \ref{dual}.

Throughout this paper we work over an algebraically closed field of characteristic zero. For an abelian group $G$ and an element $g\in G$ by $<g>$ is denoted the cyclic subgroup of $G$ generated by $g$, and by $\widehat{G}$ the group of linear characters of $G$. For $g\in G$, the element $\hat{g} \in \widehat{G}$ is defined as $\hat{g}(\chi)=\chi(g)$ for all $ \chi \in \widehat{G}$.

The standard Hopf algebraic notations from \cite{montg} are used. For a Hopf algebra $A$, by $A_{_{ad}}$ is denoted the $A$-module which has the underlying vector space $A$ and for which the action of $A$ is given by the adjoint action $ad_{_{A}}(x)(y)=\sum x_1ySx_2$.

\section{The pointed Hopf algebras with abelian coradical}\label{Halg}
Let $\mathcal{D}=(G,\;(g_i)_{1 \leq i \leq \theta},\;(\chi_i)_{1 \leq i \leq \theta},\;(a_{ij})_{1 \leq i,j \leq \theta})$ be a datum of finite Cartan type associated to an abelian group $G$. That is $g_i \in G$ and $\chi_i \in \widehat{G}$ such that $\chi_i(g_i) \neq 1$ for all $1\leq i\leq \theta$ and the Cartan condition $$\chi_j(g_i)\chi_i(g_j)=\chi_i(g_i)^{a_{ij}}$$ for all $1\leq i,j \leq \theta$. The matrix $(a_{ij})_{1\leq i,j \leq \theta}$ is a Cartan matrix of finite type. Let $\Phi$ be the root system corresponding to the Cartan matrix $(a_{ij})$, $\Phi^{+}$ be the set of positive roots of the root system $\Phi$, and $\Pi=\{\al_1,\cdots,\al_{\teta}\}$ be the corresponding set of simple roots. For $\alpha_i,\;\alpha_j \in \Pi$ one writes $i \sim j$ if the corresponding nodes in the Dynkin diagram are connected. Let $\lambda=(\lambda_{ij})_{1 \leq i,j \leq \theta,\;i \nsim j}$ be a set of linking parameters, that is $\lambda_{ij} \in k$ and $$\lambda_{ij}=0,\;\text{if}\; g_ig_j=1 \;\text{or}\; \chi_i\chi_j \neq \epsilon$$

Let $V$ a finite dimensional Yetter-Drinfeld module over the group algebra $kG$. Suppose $V$ has a basis $(x_i)_{1\leq i \leq \teta}$ with $x_i \in V_{g_i}^{\chi_i}$, where $V_{g_i}^{\chi_i}:=\{gv=\chi_i(g)v,\;\delta(v)=g_i\ot v\}$ and $\delta$ is the comodule structure of $V$. The group $G$ acts by automorphisms on the tensor algebra $T(V)$ where $g(x_i)=\chi_i(g)x_i$. The braided commutators $[x_i,\;y]_c=ad_c(x_i)(y)$ are defined by $$ad_c(x_i)(y)=x_iy-g_i(y)x_i$$ for all $y\in T(V)$. The induced map $c:T(V)\ot T(V) \ra T(V)\ot T(V)$ given by $c(x_i \ot y)=g_i(y)\ot x_i$ is a braiding and $T(V)$ becomes a braided Hopf algebra in the category of Yetter-Drinfeld modules.

Andruskiewitsch and Schneider \cite{andr} introduced the following infinite dimensional Hopf algebra $U(\mtc{D},\;\lambda)$ defined by the generators $g \in G$ and $x_1,\cdots, x_{\theta}$. As an algebra, the relations in $U(\mtc{D},\;\lambda)$ are those of $G$ and $$gx_ig^{-1}=\chi_i(g)x_i$$ $$ad_c(x_i)^{1-a_{ij}}(x_j)=0\; (i \neq j,\;i \sim j)$$ $$ad_c(x_i)(x_j)=\lambda_{ij}(1-g_ig_j),\;(i <j,\; i \nsim j)$$ The coalgebra structure of $U(\mtc{D},\;\lambda)$ is given by $$\Delta(g)=g \ot g,\;\;\; \Delta(x_i)=x_i\ot1+g_i\ot x_i$$ for all $g \in G$ and $1 \leq i \leq \theta$.
Remark that $ad_c(x_i)(y)=ad(x_i)(y)$ for all $y \in A$.

Assume that the order $N_i$ of $\chi_i(g_i)$ is odd for all $i$ and is prime to $3$ for all i in a connected component of type $G_2$. The order of $\chi_i(g_i)$ is constant in each connected component $J$; denote this common order by $N_J$ or $N_{\al}$ if $\al$ is a positive root belonging to the component $J$.

For any $\alpha \in \Phi^{+}$, $\alpha=\sum_{i=1}^{i=\theta}n_i\alpha_i$, let $\mathrm{ht}(\alpha)=\sum_{i=1}^{i=\theta}n_i$. Put $$g_{_{\al}}=g_1^{n_1}\cdots g_{\teta}^{n_{\teta}} \;\; \text{and}\;\; \chi_{_{\al}}=\chi_1^{n_1}\cdots \chi_{\teta}^{n_{\teta}}.$$

Let $(\mu_{_{\alpha}})_{\al \in \Phi^{+}}$ a system of root vectors parameters, this means that $\mu_{_{\al}} \in k$ and $$\mu_{_{\al}} =0 \;\text{if}\; g_{_{\al}}^{N_{_{\al}}}=1\;\text{or}\; \chi_{_{\al}}^{N_{_{\al}}} \neq \eps .$$

Consider $(x_{_{\al}})_{\al\in \Phi^+}$ the root vectors corresponding to the positive roots $\al \in \Phi^+$. They are iterated braided commutators of $x_i$ \cite{andr}.

The finite dimensional Hopf algebra $u(\mc{D},\;\lam,\;\mu)$ is the quotient of $U(\mc{D},\;\lambda)$ by the Hopf ideal generated by $$x_{_{\al}}^{N_{_{\al}}}-u_{_{\al}}(\mu)\;\;\;(\al \in \Phi^{+})$$ where the elements $u_{_{\al}}(\mu) \in kG$ are defined in \cite{andr}. It will be later used the fact that $u_{_{\al}}(\mu)$ are central in $u(\mc{D},\;\lam,\;\mu)$ and they lie in the augmented ideal generated by $g_i^{N_i}-1$ (see \cite{andr}).

We say that $A=u(\mtc{D},\;\lam,\;\mu)$ is of $nilpotent \;type$ if $\mu_{_{\al}}=0$ for all $\al \in \Phi^{+}$. It follows from \cite{andr} that in this situation  $u_{_{\al}}(\mu)=0$ for all $\al \in \Phi^{+}$ and we shortly write $A=u(\mtc{D},\;\lam)$

Over a field of characteristic zero any pointed finite dimensional Hopf algebra with an abelian group $G$ of grouplike elements such that the order of $G$ is not divisible by primes less than $11$ is isomorphic to some $u(\mc{D},\;\lam,\;\mu)$ \cite{andr}.

\sub{PBW-bases of $U(\mc{D},\;\lam)$}
Let $y_1,\cdots,y_p$ the ordering of ${(x_{_{\al}})}_{\al\in \Phi^{+}}$ corresponding to the convex ordering $\beta_1,\cdots \beta_p$ of the positive roots. In the paper \cite{andr} it has been shown that  $\{y_1^{u_1}\cdots y_p^{u_p}g\;|\;u_i \geq 0,\;g\in G\}$ form a PBW-basis of $U(\mtc{D},\;\lam)$. The images of $y_i$ in the quotient $u(\mtc{D},\;\lam,\;\mu)$ are also denoted by $y_i$. Then $\{y_1^{u_1}\cdots y_p^{u_p}g\;|\;\; 0 \leq u_i \leq N_{\beta_i}-1,\;g \in G\}$ form a basis for $A=u(\mtc{D},\;\lam,\;\mu)$.

\sub{Grading of $U(\mc{D},\;0)$}
Let $\ul{e}_{\;1},\cdots, \ul{e}_{\;{\theta}}$ be the standard basis of $\mathbb{Z}^{\theta}$. Then $U(\mc{D},\;0)$ is a $\mathbb{Z}^{\theta}$-graded Hopf algebra \cite{andr} where the degree of $x_i$ is $\ul{e}_{\;i}$ and any group element $g\in G$ has degree zero. Given a homogeneous element $x$ in $U(\mc{D},\;\lam)$ we denote its degree by $\mr{dim}(x)$.

If $\underline{u} \in \mathbb{N}^p$, let
$$y_{_{\underline{u}}}=y_1^{u_1}\cdots y_p^{u_p},$$$$g_{{\underline{u}}}={g_{{\beta}_1}}^{u_1}\cdots {g_{{\beta}_p}}^{u_p}, $$$$\chi_{_{\underline{u}}}={\chi_{{\beta}_1}}^{u_1}\cdots {\chi_{{\beta}_p}}^{n_p}.$$ Note that if $\ul{u}=0$ then $y_{_{\ul{u}}}=g_{_{\ul{u}}}=1$ and $\chi_{_{\ul{u}}}=\eps$.

For any positive root $\beta_i=\sum_{j=1}^{\teta}m_{ij}\al_j$ one has $\mtr{dim}(y_i)=\sum_{j=1}^{\teta}m_{ij}\ul{e}_j >0$ and if $\ul{u} \in \mathbb{N}^p$ then $\mtr{dim}(y_{\ul{u}})=\sum_{i=1}^pu_i\mtr{dim}(y_i)$.

Since $gx_ig^{-1}=\chi_i(g)x_i$ one has that $gy_{_{\ul{u}}}g^{-1}=\chi_{{\ul{u}}}(g)y_{_{\ul{u}}}$ for all $\ul{u} \in \mathbb{N}^p$.
From \cite{ri} one knows that if $1 \leq i<j \leq p $ then

 $$y_{j}y_i=\chi_{_{{\beta}_i}}(g_{_{{\al}_j}})y_iy_j+\sum_{I(i,\;j)}c(a_{i+1},\;\cdots,\; a_{j-1})y_{i+1}^{a_{i+1}}\cdots y_{j-1}^{a_{j-1}}$$

 where $$I(i,\;j)=\{(a_{i+1}\cdots, a_{j-1}) \in \mathbb{N}^{j-i-1}\;|\; \sum_{s=i+1}^{j-1}a_s\mathrm{dim}(y_s)=\mathrm{dim}(y_i)+\mathrm{dim}(y_j)\}$$ and $c(a_{i+1},\;\cdots a_{j-1})\in k$.

It follows that in $U(\mtc{D},\;0)$ one has \begin{equation}\label{mult}y_{_{\underline{u}}}y_{_{\underline{v}}}=\sum_{\underline{w} \in \mathbb{N}^p}y_{_{\underline{w}}}a_{\;\underline{w}}(\ul{u},\;\ul{v})\end{equation} such that $a_{\;\underline{w}}(\ul{u},\;\ul{v}) \in k$ and $\mathrm{dim}(y_{_{\underline{w}}})=\mathrm{dim}(y_{_{\underline{u}}})+\mathrm{dim}(y_{_{\underline{v}}})$.

Let \begin{equation}\label{comult}\D(y_{_{\ul{u}}})=\sum_{\ul{v},\;\ul{w} \in \mathbb{N}^p}y_{_{\ul{v}}}c_{\ul{v}\;,\ul{w}}^{\ul{u}}\ot y_{_{\ul{w}}}d_{\ul{v}\;,\ul{w}}^{\ul{u}}\end{equation} in $U(\mc{D},\;\lam)$ where $ c_{\ul{v}\;,\ul{w}}^{\ul{u}},\;d_{\ul{v}\;,\ul{w}}^{\ul{u}}\in k$. Since $U(\mc{D},\;\lam)$ is a $\mathbb{Z}^{\theta}$-graded Hopf algebra one has that $\mr{dim}(y_{_{\ul{u}}}) = \mr{dim}(y_{_{\ul{v}}})+\mr{dim}(y_{_{\ul{w}}})$.

\sub{The situation $A=u(\mtc{D},\;0,\;\mu)$}
Consider now $A=u(\mtc{D},\;0,\;\mu)$ as quotient of $U(\mtc{D},\;0)$.

Then the multiplication relation \ref{mult} becomes \begin{equation}\label{qotmult}y_{_{\underline{u}}}y_{_{\underline{v}}}=\sum_{\underline{w} \in \mathbb{N}^p}y_{_{\underline{w}}}a_{\;\underline{w}}(\ul{u},\;\ul{v})\end{equation} where  now $a_{\;\underline{w}}(\ul{u},\;\ul{v})\in kG$ and $\mathrm{dim}(y_{_{\underline{w}}}) \leq \mathrm{dim}(y_{_{\underline{u}}})+\mathrm{dim}(y_{_{\underline{v}}})$.

The comultiplication is given by \begin{equation}\label{qotcomult}\D(y_{_{\ul{u}}})=\sum_{\ul{v},\;\ul{w} \in \mathbb{N}^p}y_{_{\ul{v}}}c_{\ul{v}\;,\ul{w}}^{\ul{u}}\ot y_{_{\ul{w}}}d_{\ul{v}\;,\ul{w}}^{\ul{u}}\end{equation} in $u(\mtc{D},\;0,\;\mu)$ where now $ c_{\ul{v}\;,\ul{w}}^{\ul{u}},\;d_{\ul{v}\;,\ul{w}}^{\ul{u}}\in kG$ and $\mr{dim}(y_{_{\ul{u}}}) > \mr{dim}(y_{_{\ul{v}}})+\mr{dim}(y_{_{\ul{w}}})$.


Let $\mc{I}$ be the ideal of $kG$ generated by the elements $u_{_{\al}}(\mu)$, $\al \in \Phi^{+}$. Then $\eps(\mc{I})=0$ and also $\chi_j(\mc{I})=0$ for any $1\leq j \leq \teta$. Indeed, the elements $u_{_{\al}}(\mu)$ lie in the augmented ideal generated by $g_i^{N_i}-1$ (see \cite{andr}) therefore $\eps(\mc{I})=0$. On the other hand $\mu_i \neq 0$ implies  that $\chi_i^{N_i}=\eps$ from the definition of $\mu_i$. The equation $\chi_j(g_i)\chi_i(g_j)=\chi_i(g_i)^{a_{ij}}$ raised to the power $N_i$ gives that $\chi_j(g_i)^{N_i}=1$, thus $\chi_j(g_i^{N_i}-1)=0$.

If $\mathrm{dim}(y_{_{\ul{w}}}) < \mathrm{dim}(y_{_{\ul{u}}})+\mathrm{dim}(y_{_{\ul{v}}})$ in \ref{qotmult} then $a_{\;\underline{w}}(\ul{u},\;\ul{v})\in \mc{I}$  since the only way to get a smaller degree in a product of type $y_{i_1}y_{i_2}\cdots y_{i_s}$ is by using the factoring relations $x_{\al}^{N_{\al}}=u_{_{\al}}(\mu)$. Then $\eps(a_{\;\underline{w}}(\ul{u},\;\ul{v}))=0$.

On the other hand if $\mr{dim}(y_{_{\ul{u}}}) > \mr{dim}(y_{_{\ul{v}}})+\mr{dim}(y_{_{\ul{w}}})$ in the comultiplication formula \ref{qotcomult} then by the same argument as above one has that $c_{\ul{v}\;,\ul{w}}^{\ul{u}} \in \mc{I}$ or $d_{\ul{v}\;,\ul{w}}^{\ul{u}} \in \mc{I}$.

In this situation $\eps (c_{\ul{v}\;,\ul{w}}^{\ul{u}})=0$ or $\eps (d_{\ul{v}\;,\ul{w}}^{\ul{u}})=0$. Moreover, since $c_{\ul{v}\;,\ul{w}}^{\ul{u}}$ or $d_{\ul{v}\;,\ul{w}}^{\ul{u}}$ is in the ideal $\mc{I}$ of $kG$ generated by $u_{_{\al}}(\mu)$ one also has $\chi_i( c_{\ul{v}\;,\ul{w}}^{\ul{u}})=0$ or $\chi_i( d_{\ul{v}\;,\ul{w}}^{\ul{u}})=0$, for all $1 \leq i \leq \teta$.

\sub{The situation $A=u(\mc{D},\;0,\;0)$}
If $A$ is of nilpotent type then the factoring relations are $x_{\al}^{N_{\al}}=0$ and the degree is preserved by multiplication or comultiplication. Thus in this situation $A=u(\mtc{D},\;0,\;0)$ is also $\mathbb{Z}^{\teta}$-graded Hopf algebra and $A\cong kG\#B(V)$ (see \cite{andr}).
\section{The dual Hopf algebra}\label{dual}
Let $A=u(\mc{D},\;0,\;\mu)$ a Hopf algebra as above. For $1\leq l \leq p$, let $\ul{f}_{\;l} \in \mathbb{N}^p$ be the element $(0,\;\cdots, 1,\; \cdots,\;0)$ with $1$ on the $l$-th position. Consider the numbers $m_i$ with $1 \leq m_i \leq p$ such that $\al_i=\beta_{m_i}$ for all $1 \leq i \leq \teta$. Thus $y_{m_i}=x_i=y_{_{\ul{f}_{\;m_i} }}$.

Extend any linear characters $\chi \in \widehat{G}$ to an element of $A^*$ such that $\chi(y_{_{\underline{u}}}g)=0$ if $\underline{u} \neq 0$. Consider also the following elements $\xi_i \in A^*$ defined by $\xi_i(y_{_{\ul{u}}}g)=\delta_{\ul{u},\;\ul{f}_{{\;m_i}}}$ for all $\ul{u} \in \mathbb{N}^p$. One has that $\xi_i(x_ia)=\eps(a)$ for all $a \in kG$.

The following lemma [\cite{krop}, Lemma 1.] will be used in the proof of the third relation of the next proposition.

\begin{lemma}\label{krp}Let $H$ be a bialgebra over the field $k$ and suppose that $K$ is a sub-bialgebra of $H$ with antipode. Suppose that $a \in G(K)$ and $x \in H\setminus K$ satisfy $xa=qax$ for some non-zero $q \in k$ and $\D(x)=x\ot a +1\ot x$. Let $V=K+Kx+\cdots$. Then:
\begin{enumerate}
\item $V$ is a free left $K$-module under left multiplication with basis $\{1,x,\; x^2,\cdots\}$ or $\{1,x,\; x^2,\cdots, x^{n-1}\}$ for some $n\geq 2$.
\item Suppose that $k$ has characteristic zero and $V$ has left $K$-module basis\\ $\{1,x,\; x^2,\cdots, x^{n-1}\}$ for some $n \geq 2$. Then $q$ is a primitive n-th root of unity and $x^n=c$ for some $c \in K$ which satisfies $\D(c)=c\ot a^n+1\ot c$. In particular $a \neq 1$.
\item Suppose that $g \in G(K)$ and $z \in K+Kx$ satisfy $\D(z)=z\ot g+1\ot z$. If $z \notin K$ then $g=a$ and $z=\alpha x+b$ where $\alpha \in K$ is not zero and $b \in K $ satisfy $\D(z)=b\ot a +1 \ot b$.
\end{enumerate}
\end{lemma}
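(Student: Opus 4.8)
This lemma is quoted directly from \cite{krop}, so the proof essentially amounts to reproducing the argument of [\cite{krop}, Lemma 1], and the three parts should be handled in order since each feeds into the next. For part (1), the plan is to show that $V = K + Kx + Kx^2 + \cdots$ is closed under left multiplication by $x$ and hence is the smallest left $K$-submodule of $H$ containing $1$ and $x$; closure follows because $x\cdot(ux^j) = (xux^{-1})\cdot? $ — more carefully, one pushes $x$ past elements of $K$ using the relation $xa = qax$ only on the grouplike $a$, but since a general $u\in K$ need not commute with $x$ in a controlled way, instead one argues directly: $V$ is by definition $\sum_{j\ge 0} Kx^j$, and $x\cdot\sum_j u_j x^j = \sum_j (xu_j)x^j$; writing $xu_j \in H$, one needs $xu_j \in V$, which holds because... actually the cleanest route is: define $V$ as the left $K$-module generated by all $x^j$, observe $Vx \subseteq V$ trivially, and $xV \subseteq V$ needs the extra input that $KxK \subseteq Kx + K$, which follows from $\D(x) = x\ot a + 1\ot x$ being a skew-primitive with grouplike $a\in K$ forcing $x$ to normalize $K$ up to $K$ itself (standard for skew-primitives over a sub-Hopf-algebra). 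Then freeness with the stated basis is a consequence of $V$ being a left $K$-module which is either infinite-dimensional free or, by finite-dimensionality considerations inside $H$, truncates at some minimal $n$ with $x^n \in Kx^{n-1} + \cdots + K$; a degree/filtration argument using the skew-primitive comultiplication shows the truncation relation can be normalized to $x^n \in K$.

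For part (2), assuming the basis is $\{1, x, \dots, x^{n-1}\}$, I would compute $\D(x^n)$ using the $q$-binomial theorem: since $\D(x) = x\ot a + 1\ot x$ and $x$ and $a$ $q$-commute via $xa = qax$, one gets $\D(x^n) = \sum_{i=0}^n \binom{n}{i}_q x^i a^{n-i}\ot x^{n-i}$, where $\binom{n}{i}_q$ is the Gaussian binomial coefficient (this is exactly the type of formula collected in the Appendix from \cite{AS2}). Now $x^n = c \in K$, so $\D(c) = \D(x^n)$ lies in $K\ot K$; but the terms with $0 < i < n$ involve $x^i$ with $0 < i < n$, which are part of the free $K$-basis and hence \emph{not} in $K$ unless their coefficients vanish. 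Therefore $\binom{n}{i}_q = 0$ in $k$ for all $0 < i < n$, which (in characteristic zero) forces $q$ to be a primitive $n$-th root of unity, and the surviving terms give $\D(c) = c\ot a^n + 1\ot c$. The assertion $a\neq 1$ follows because if $a = 1$ then $x$ would be primitive, but a nonzero primitive element in a finite-dimensional Hopf algebra over a field of characteristic zero is impossible (it would generate a polynomial subalgebra), contradicting the truncation $n \ge 2$; alternatively $q$ primitive $n$-th root with $n\ge 2$ already gives $q\neq 1$, and one checks $q$ is determined by the $G$-action so $a\neq1$.

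For part (3), suppose $z \in K + Kx$ with $\D(z) = z\ot g + 1\ot z$ and $z\notin K$; write $z = b + \alpha x$ with $b,\alpha\in K$ and $\alpha\neq 0$ (nonzero since $z\notin K$). Expanding $\D(z) = \D(b) + \D(\alpha)\D(x) = \D(b) + \D(\alpha)(x\ot a + 1\ot x)$ and comparing with $z\ot g + 1\ot z = (b+\alpha x)\ot g + 1\ot(b + \alpha x)$, I would match the component lying in $K\ot Kx$ (the ``$x$ in the second tensor slot'' part): on the left this is $\D(\alpha)(1\ot x) = \sum \alpha_1\ot\alpha_2 x$, and since $\alpha\in K$ and $x$ normalizes $K$, comparing with $1\ot \alpha x$ forces $\D(\alpha) = 1\ot\alpha + (\text{terms})$, and pushing through one deduces $\alpha$ is grouplike-like and ultimately that $g = a$; then the remaining components give $\D(b) = b\ot a + 1\ot b$ after substituting $g=a$ and using that $\alpha$ must in fact be a scalar multiple of a grouplike that forces consistency, leaving $b$ skew-primitive with the same grouplike $a$. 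The main obstacle I anticipate is part (1): carefully justifying that $x$ normalizes $K$ in the sense $xK \subseteq Kx + K$ and that the truncation relation can always be brought to the normal form $x^n = c \in K$ (rather than a more general $\sum_{j<n} u_j x^j$) — this requires a filtration argument on the coradical filtration or on powers of $x$, exploiting the skew-primitivity of $x$ to kill the lower-order terms inductively. Parts (2) and (3) are then comparatively mechanical $q$-binomial and tensor-component bookkeeping.
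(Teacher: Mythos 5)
The paper offers no proof of this lemma: it is quoted verbatim from [\cite{krop}, Lemma~1] and used as a black box, so there is no in-paper argument to compare with, and your reconstruction must stand on its own. As it stands it has genuine gaps. First, you appeal twice to finite-dimensionality of $H$ (to force the truncation in part (1), and to exclude primitives in part (2)), but the lemma assumes only that $H$ is a bialgebra; the infinite basis $\{1,x,x^2,\dots\}$ is explicitly one of the allowed outcomes, so no dimension count is available. Second, the claim that skew-primitivity of $x$ forces $xK\subseteq Kx+K$ is not a consequence of the hypotheses: you are given $xa=qax$ only for the single grouplike $a$, and nothing about how $x$ multiplies against a general element of $K$. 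Fortunately that claim is also unnecessary, since $V=\sum_{j\ge 0}Kx^j$ is a left $K$-module by construction, and a relation $x^n=\sum_{j<n}u_jx^j$ propagates to all higher powers by multiplying it on the \emph{right} by $x$. Third, and most seriously, part (2) is begged: the hypothesis that $\{1,\dots,x^{n-1}\}$ is a basis gives only $x^n=\sum_{j=0}^{n-1}u_jx^j$ with $u_j\in K$, whereas you write ``$x^n=c\in K$'' at the outset --- that $u_j=0$ for $1\le j\le n-1$ is precisely what must be proved. A related gap undermines part (1): a minimal $K$-linear dependence among the powers of $x$ need not have an invertible leading coefficient, so one cannot simply ``solve for $x^m$'' to obtain the truncated basis.

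The missing engine behind all of these steps is one computation, which you correctly identify for the cross terms but do not push far enough. From $\D(x)=x\ot a+1\ot x$ and $(1\ot x)(x\ot a)=q(x\ot a)(1\ot x)$ one gets $\D(x^m)=\sum_i{m\choose i}_q\,x^i\ot a^ix^{m-i}$; granting freeness of $V$, the module $V\ot V$ is free over $K\ot K$ with basis $\{x^i\ot x^j\}$, and one compares coefficients in $\D(x^n)=\D\bigl(\sum_ju_jx^j\bigr)$. The left side contributes to basis elements $x^i\ot x^{n-i}$ of total degree $n$, which cannot occur on the right (all total degrees there are $\le n-1$), forcing ${n\choose i}_q=0$ for $0<i<n$ and hence, in characteristic zero, $q$ a primitive $n$-th root of unity; the remaining comparison in degrees $\le n-1$ then kills $u_1,\dots,u_{n-1}$ and yields $\D(c)=c\ot a^n+1\ot c$. (Your ``alternative'' argument for $a\ne 1$ is the right one: $a=1$ forces $q=1$, contradicting $n\ge 2$.) Part (3) of your sketch is sound in outline and needs no normalization of $K$ by $x$: writing $z=b+\alpha x$ and comparing the $Kx\ot K$ and $K\ot Kx$ components gives $\D(\alpha)(1\ot a)=\alpha\ot g$ and $\D(\alpha)=1\ot\alpha$, whence applying $\varepsilon$ in each tensor slot shows $\alpha$ is a nonzero scalar and $g=a$, and the $K\ot K$ component gives $\D(b)=b\ot a+1\ot b$.
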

Let $\overline{A}$ the subalgebra (with unit) of $A$ generated by $x_i,\;1 \leq i \leq \teta$.
Some algebra and coalgebra relations for $A^*$ are given in the next proposition.
\begin{prop} \label{part*}The following relations hold in $A^*$:
\begin{enumerate}
\item $\D(\xi_i)=\xi_i\ot 1+\chi_i\ot \xi_i$
\item $\chi\xi_i\chi^{-1}=\chi(g_i)\xi_i$, if $\chi \in G(A^*)$. In particular $\chi_j\xi_i=\chi_j(g_i)\xi_i\chi_j$.
\item $\xi_i^{N_i}=0$
\item $ad(\xi_i)^{1-a_{ij}}(\xi_j)=0$ for all $1 \leq i,\;j \leq \theta$
\end{enumerate}
\end{prop}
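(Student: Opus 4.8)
The plan is to verify the four relations by evaluating both sides on the PBW basis elements $y_{\underline{u}}g$ of $A=u(\mathcal{D},0,\mu)$, using the multiplication formula \eqref{qotmult} and comultiplication formula \eqref{qotcomult} together with the degree constraints they carry, and for the last two relations invoking Lemma \ref{krp} applied inside $A^*$.

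\smallskip

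\textbf{Relation (1).} I would compute $\Delta(\xi_i)(y_{\underline{u}}g \ot y_{\underline{v}}h)=\xi_i(y_{\underline{u}}g\, y_{\underline{v}}h)$ using \eqref{qotmult}. Since $\xi_i$ is supported on the degree-one component $\underline{f}_{m_i}$, only terms $y_{\underline{w}}a_{\underline{w}}(\underline{u},\underline{v})$ with $\dim(y_{\underline{w}})=\underline{e}_i$ contribute, which forces $(\underline{u},\underline{v})$ to be one of $(\underline{f}_{m_i},0)$ or $(0,\underline{f}_{m_i})$ (degree is sub-additive, and a degree-one element of $\mathbb{N}^p$ cannot split nontrivially). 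In the first case one picks up $\chi$-values via $gy_{\underline{v}}g^{-1}=\chi_{\underline{v}}(g)y_{\underline{v}}$, giving the $\xi_i\ot 1$ term evaluated against $\epsilon$ on the left tensorand replaced appropriately; in the second case one gets $\chi_i(g)$ times the value, which is exactly $(\chi_i\ot\xi_i)(y_{\underline{u}}g\ot y_{\underline{v}}h)$. This is a short bookkeeping computation.

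\smallskip

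\textbf{Relation (2).} For $\chi\in G(A^*)=\widehat G$ (extended by zero off $kG$), evaluate $(\chi\xi_i\chi^{-1})(y_{\underline{u}}g)$. The convolution product is nonzero only when the middle factor $\xi_i$ catches the degree-one part, i.e.\ $\underline{u}=\underline{f}_{m_i}$, and then the two outer $\chi^{\pm1}$ factors are forced onto group elements; commuting $x_i$ past $g$-pieces via $gx_ig^{-1}=\chi_i(g)x_i$ produces the scalar $\chi(g_i)$. The special case $\chi=\chi_j$ together with relation (1) (so that $\chi_j$ and $\xi_i$ are "skew-commuting" group-like and skew-primitive elements) gives the stated $\chi_j\xi_i=\chi_j(g_i)\xi_i\chi_j$.

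\smallskip

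\textbf{Relations (3) and (4).} Here is where Lemma \ref{krp} enters. Let $K\subseteq A^*$ be the sub-bialgebra generated by $\widehat G$ (a group algebra, hence with antipode), and set $x=\xi_i$, $a=\chi_i$. By (1), $\Delta(\xi_i)=\xi_i\ot\chi_i+1\ot\xi_i$ after replacing $\xi_i$ by a suitable scalar multiple so the skew-primitive form matches the Lemma's hypothesis (or argue directly with the form as stated), and by (2), $\xi_i\chi_i=\chi_i(g_i)\chi_i\xi_i$, so $q=\chi_i(g_i)$, whose order is $N_i$. Applying part (1) of the Lemma, the left $K$-module $V=K+K\xi_i+\cdots$ has basis $\{1,\xi_i,\dots,\xi_i^{n-1}\}$ for some $n\geq 2$; I need to show $n=N_i$. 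For this I would produce dual PBW-type elements in $A^*$ that are linearly independent — using that the $y_{\underline{u}}g$ with $0\le u_i\le N_{\beta_i}-1$ form a basis of $A$ and that $\xi_i^k$ detects $x_i^k=y_{m_i}^k$ — to see that $\xi_i,\dots,\xi_i^{N_i-1}$ are nonzero and independent while $\xi_i^{N_i}$ can be analyzed. Then part (2) of the Lemma forces $q=\chi_i(g_i)$ to be a primitive $n$-th root of unity, giving $n=N_i$, and $\xi_i^{N_i}=c\in K$ with $\Delta(c)=c\ot\chi_i^{N_i}+1\ot c$; evaluating $c$ against group elements and using $\chi_i(g_i^{N_i})=1$ together with the fact (established in the excerpt) that $\chi_j(\mathcal{I})=0$ forces $c=\epsilon(c)1$, and a degree/support argument gives $c=0$, proving (3). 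For (4), I would identify the subalgebra of $A^*$ generated by $\chi_i$ and the $\xi_j$ as a quotient of $U(\mathcal{D}',\cdot)$ for the transpose datum (or argue directly via iterated braided commutators): the relation $ad(\xi_i)^{1-a_{ij}}(\xi_j)=0$ is dual to the Serre-type relation $ad_c(x_i)^{1-a_{ij}}(x_j)=0$ holding in $A$, and one checks it by pairing $ad(\xi_i)^{1-a_{ij}}(\xi_j)$ against an arbitrary PBW monomial $y_{\underline{u}}g$ and using \eqref{qotcomult} to reduce to the known relations in $A$.

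\smallskip

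\textbf{Main obstacle.} The delicate point is (3): pinning down $n=N_i$ in Lemma \ref{krp}, i.e.\ showing $\xi_i^{N_i-1}\ne 0$ and that $\xi_i^{N_i}$ actually lands in $K$ and then vanishes. The nonvanishing of $\xi_i^{N_i-1}$ requires exhibiting an explicit test element $y_{\underline{u}}g\in A$ (essentially $x_i^{\,N_i-1}$) on which it is nonzero, which in turn needs the PBW basis of $A$ and control of the constants in \eqref{qotmult}; the vanishing of $\xi_i^{N_i}$ needs the structural input that $u_{\alpha}(\mu)$ lies in $\mathcal{I}$ with $\epsilon(\mathcal{I})=\chi_j(\mathcal{I})=0$, exactly as prepared in Section \ref{Halg}. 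Relation (4) is comparatively routine once the pairing formalism is set up.
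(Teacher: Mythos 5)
Your treatment of relations (1)--(3) follows essentially the paper's own route: direct evaluation on the PBW basis with the degree bookkeeping coming from \eqref{qotmult}--\eqref{qotcomult} and the ideal $\mc{I}$, and Lemma \ref{krp} for the nilpotency $\xi_i^{N_i}=0$. Two corrections there. First, a scalar multiple of $\xi_i$ does not convert $\D(\xi_i)=\xi_i\ot 1+\chi_i\ot\xi_i$ into the form $\D(x)=x\ot a+1\ot x$ required by Lemma \ref{krp}; one must pass to $A^{*\;coop}$ (as the paper does, taking $K=k\langle\chi_i\rangle$) or replace $\xi_i$ by $\xi_i\chi_i^{-1}$. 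Second, the ``main obstacle'' you identify --- producing test elements to show $\xi_i^{N_i-1}\neq 0$ --- is not actually needed: since $A^*$ is finite dimensional the second alternative of part (1) of Lemma \ref{krp} must hold, and part (2) then asserts that $q$ is a \emph{primitive} $n$-th root of unity, so $n$ is forced to be the order $N_i$ of $\chi_i(g_i)$ with no further input; after that $\xi_i^{N_i}\in k\langle\chi_i\rangle$ vanishes because it kills every $g\in G$ and distinct characters of $G$ are linearly independent.

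The genuine gap is in relation (4), which you dismiss as ``comparatively routine.'' Pairing $z=ad(\xi_i)^{1-a_{ij}}(\xi_j)$ against an arbitrary PBW monomial and ``reducing to the known relations in $A$'' is not a workable plan as stated: $z$ is a signed sum of products of $2(1-a_{ij})+1$ factors from $A^*$ (including antipodes), so evaluating it on $y_{_{\ul{u}}}g$ requires the full iterated coproduct of $y_{_{\ul{u}}}g$, and there is no direct duality identifying $ad_{_{A^*}}(\xi_i)^{1-a_{ij}}(\xi_j)$ with the Serre relation $ad_c(x_i)^{1-a_{ij}}(x_j)=0$ of $A$ that would make this collapse. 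The missing idea is the structural fact supplied by the Appendix (Corollary \ref{prim}): $z$ is \emph{skew-primitive}, $\D(z)=z\ot 1+\chi_i^{1-a_{ij}}\chi_j\ot z$, and this is itself a nontrivial quantum-binomial computation in the style of Lemma A.1 of \cite{AS2}, resting on the identities \ref{s1}--\ref{s2}. Once skew-primitivity is in hand, $z$ vanishes on $G$, on all products $x_{i_1}\cdots x_{i_r}$ with $r\geq 2$ (by induction from the skew-derivation property), and on each generator $x_m$ by the explicit evaluation of $(ad(f)(f'))(x_m)$ under the hypotheses $f(g_m)=f'(g_m)=\eps(f')=0$; combined with $z(gy)=\chi(g)z(y)$ this forces $z=0$. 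Without the skew-primitivity step your argument for (4) does not go through, and that step is where the real work of the proposition lies.
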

\begin{proof}

 1) From definition of $\xi_i$ it can be seen that $\xi_i(yg)=\xi_i(y)$ for all $y \in \overline{A}$ and $g \in G$. One has to show that $\xi_i(ab)=\xi_i(a)\eps(b)+\chi_i(a)\xi_i(b)$ for all $a,\;b\in A$. It is enough to check the last relation on the basis elements of $A$. Thus one has to show that: \begin{equation}\label{tmb}\xi_i((y_{_{\ul{u}}}g)(y_{_{\ul{v}}}h))=\xi_i(y_{_{\ul{u}}}g)
 \eps(y_{_{\ul{v}}}h)+\chi_i(y_{_{\ul{u}}}g)\xi_i(y_{_{\ul{v}}}h)\end{equation}
for all $\ul{u},\;\ul{v}\in \mathbb{N}^p$ and all $g,\;h \in G$.

Since $gy_{_{\ul{v}}}=\chi_{_{\ul{v}}}(g)y_{_{\ul{v}}}g$ it follows that $\xi_i((y_{_{\ul{u}}}g)(y_{_{\ul{v}}}h))=\chi_{_{\ul{v}}}(g)\xi_i(y_{_{\ul{u}}}y_{_{\ul{v}}}gh)=\\=\chi_{_{\ul{v}}}(g)\xi_i(y_{_{\ul{u}}}y_{_{\ul{v}}})$. On the other hand $\xi_i(y_{_{\ul{u}}}g)\eps(y_{_{\ul{v}}}h)+\chi_i(y_{_{\ul{u}}}g)\xi_i(y_{_{\ul{v}}}h)=\xi_i(y_{_{\ul{u}}})\eps(y_{_{\ul{v}}})++
\chi_i(y_{_{\ul{u}}}g)\xi_i(y_{_{\ul{v}}})$. Thus one has to show that:  \begin{equation*}\chi_{_{\ul{v}}}(g)\xi_i(y_{_{\ul{u}}}y_{_{\ul{v}}})=\xi_i(y_{_{\ul{u}}})\eps(y_{_{\ul{v}}})+
\chi_i(y_{_{\ul{u}}}g)\xi_i(y_{_{\ul{v}}})\end{equation*}

 If $\ul{u} \neq 0$ and $\ul{v} \neq 0$ then $\mathrm{dim} (y_{_{\ul{u}}}) >0$ and $\mathrm{dim}(y_{_{\ul{v}}})>0$. The right hand side of the above equation is zero since $\eps(y_{_{\ul{v}}})=\chi_i(y_{_{\ul{u}}}g)=0$. On the other hand if $y_{_{\ul{u}}}y_{_{\ul{v}}}$  written with respect to the above basis of $A$ contains a term of the type $x_ia_i$ with $a_i\in kG$  then since $\mr{dim}(y_{_{\ul{u}}}y_{_{\ul{v}}}) \neq \mr{dim}(x_i)$ it follows from the discussion of the previous section that $\eps(a_i)=0$ and then $\xi_i(x_ia_i)=0$. Thus in this situation both terms of the above equation are zero. (Note that $\mr{dim}(y_{_{\ul{u}}}y_{_{\ul{v}}})=\mr{dim}(x_i)$ implies that $\ul{u}=\ul{f}_{m_i}$ and $\ul{v}=0$ or $\ul{u}=0$ and $\ul{v}=\ul{f}_{m_i}$.)

 Suppose now that $\ul{u} = 0$ which means that $y_{_{\ul{u}}} =1$. The  equation \ref{tmb} becomes $\chi_{_{\ul{v}}}(g)\xi_i(y_{_{\ul{v}}})=\chi_i(g)\xi_i(y_{_{\ul{v}}})$. From the definition of $\xi_i$ the only possibility for both terms to be nonzero is that of $\ul{v}=\ul{f}_{\;i}$ which means $y_{_{\ul{v}}}=x_i$. In this situation the left hand side is $\chi_{_{\ul{f}_{\;i}}}(g)\xi_i(x_i)=\chi_i(g)$ which is the same value as the one of the right hand side term.

 The last possibility to discuss is when $\ul{v}=0$ which means that $y_{_{\ul{v}}} =1$. Then the equation \ref{tmb} becomes $\xi_i(y_{_{\ul{u}}})=\xi_i(y_{_{\ul{u}}})$.

 2) If $\chi \in G(A^*)$ then $\chi(u_{_{\al}}(\mu))=\chi(x_{\al}^{N_{\al}})=\chi(x_{\al})^{N_{\al}}=0$. Since $\chi(u_{_{\al}}(\mu))=0$ it follows that $\chi$ is zero on the ideal $\mc{I}$ of $kG$. 

One has to prove that \begin{equation}\label{cai}\chi\xi_i(y_{_{\ul{u}}}g)=\chi(g_i)\xi_i\chi(y_{_{\ul{u}}}g)\end{equation} for all $\ul{u}\in  \mathbb{N}^p$ and $g \in G$.

As in the previous section, let $$\D(y_{_{\ul{u}}})=\sum_{\ul{v},\;\ul{w} \in \mathbb{N}^p}y_{_{\ul{v}}}c_{\ul{v}\;,\ul{w}}^{\ul{u}}\ot y_{_{\ul{w}}}d_{\ul{v}\;,\ul{w}}^{\ul{u}}$$ where $ c_{\ul{v}\;,\ul{w}}^{\ul{u}},\;d_{\ul{v}\;,\ul{w}}^{\ul{u}}\in kG$. Then the first term of equation \ref{cai} becomes $$\chi\xi_i(y_{\ul{u}}g)=\sum_{\ul{v},\;\ul{w} \in \mathbb{N}^p}\chi(y_{_{\ul{v}}}c_{\ul{v},\;\ul{w}}^{\ul{u}}g)\xi_i( y_{_{\ul{w}}}d_{\ul{v}\;,\ul{w}}^{\ul{u}}g)$$ The only possibility for the right hand side term of the previous equality to be nonzero is when $\mr{dim} (y_{_{\ul{v}}})=0$ and $\mr{dim}( y_{_{\ul{w}}})=\ul{e}_{\;i}$ which means $\ul{v}=0$ and $\ul{w}=\ul{f}_{\;i}$. If $\mr{dim}(y_{_{\ul{u}}}) \neq \ul{e}_{\;i}$ then this is possible only by reduction via the factoring relations and as in the discussion from the previous section it follows that either $c_{\ul{v},\;\ul{w}}^{\ul{u}}$ or $d_{\ul{v},\;\ul{w}}^{\ul{u}}$ are in the ideal $\mc{I}$ generated by $u_{_{\al}}(\mu)$. Then either $\chi(y_{_{\ul{v}}}c_{\ul{v},\;\ul{w}}^{\ul{u}}g)=0$ (if $c_{\ul{v},\;\ul{w}}^{\ul{u}} \in \mc{I}$) or $\xi_i( y_{_{\ul{w}}}d_{\ul{v}\;,\ul{w}}^{\ul{u}}g)=0$ (if $d_{\ul{v},\;\ul{w}}^{\ul{u}}\in \mc{I}$). Thus if $\mr{dim}(y_{_{\ul{u}}}) \neq \ul{e}_{\;i}$ the left hand side of the equation \ref{cai} is zero.

If $\mr{dim}(y_{_{\ul{u}}})= \ul{e}_{\;i}$, which is equivalent to $y_{_{\ul{u}}}=x_i$, then $\D(x_ig)=x_ig\ot g+ g_ig \ot x_ig$ and $\chi\xi_i(x_ig)=\chi(g_ig)$.

For the second term of equation \ref{cai} one has that $$\chi(g_i)\xi_i\chi(y_{_{\ul{u}}}g)=\chi(g_i)\sum_{\ul{v},\;\ul{w} \in \mathbb{N}^p}\xi_i(y_{_{\ul{v}}}c_{\ul{v},\;\ul{w}}^{\ul{u}}g)\chi( y_{_{\ul{w}}}d_{\ul{v}\;,\ul{w}}^{\ul{u}}g)$$ A similar discussion shows that the only possibility for this term to be nonzero is when $\mr{dim}(y_{_{\ul{v}}})=\ul{e}_{\;i}$ and $\mr{dim}(y_{_{\ul{w}}})=0$ which are equivalent to $\ul{v}=\ul{f}_{\;m_i}$ and $\ul{w}=0$. If $\mr{dim}(y_{_{\ul{u}}}) \neq \ul{e}_{\;i}$ then as in the discussion from the previous paragraph it follows that either $c_{\ul{v},\;\ul{w}}^{\ul{u}}$ or $d_{\ul{v},\;\ul{w}}^{\ul{u}}$ are in the ideal $\mc{I}$ generated by $u_{_{\al}}(\mu)$ and then the value of the term is still $0$.

If $y_{_{\ul{u}}}=x_i$ then, using the formula for $\D(x_i)$, 
one has that $\chi(g_i)\xi_i\chi(x_ig)=\chi(g_ig)$, thus the equation \ref{cai} is true in this situation too.

Computing $(\D \ot \mr{Id})\D$ and $(\mr{Id} \ot \D )\D$ for $\xi_i$ in the formula from 1) it follows that $\D(\chi_i)=\chi_i\ot \chi_i$, thus $\chi_i$ are grouplike elements of $A^*$ for any $1 \leq i \leq \teta$. Then the second relation of 2) follows from the first one.

3)Let $H$ be the Hopf subalgebra of $A^{*\;coop}$ generated by $\xi_i$ and $\chi_i$ 
One has $\chi_i\xi_i=\chi_i(g_i)\xi_i\chi_i$ and the order of $\chi_i(g_i)$ is $N_i$. The second statement of Lemma \ref{krp} applied for $K=k<\chi_i>$ and  $x=\xi_i$ gives that $\xi_i^{N_i} \in k<\chi_i>$. But since $\xi_i^{N_i}(g)=0$ for all $g \in G$ it follows that $\xi_i^{N_i}=0$.

4) Let $z=ad(\xi_i)^{1-a_{ij}}(\xi_j)$. Clearly $z(g)=0$ for all $g \in G(A)$ since $\xi_j(g)=0$. From Lemma \ref{prim} from Appendix one knows that $z$ is a skew primitive element of $A^*$, that is $$\D(z)=z \ot 1+ \chi \ot z$$ where $\chi=\chi_i^{1-a_{ij}}\chi_j$.  Then $z(gy)=\chi(g)z(y)$ for all $g \in G$ and $y \in \overline{A}$. On the other hand $z(x_ix_j)=z(x_i)\eps(x_j)+\chi(x_i)z(x_j)=0$ for all $1 \leq i,\;j \leq \teta$ and by induction on $r$ one has $z(x_{i_1}x_{i_2}\cdots x_{i_r})=0$ for all $r \geq 2$.

In order to show that $z=0$ it is enough to check that $z(x_m)=0$ for all $1 \leq m\leq \teta$.

Let $f,f' \in A^*$. Then $$(ad(f)(f'))(x)=(f_1f'S(f_2))(x)=f_1(x_1)f'(x_2)f_2(Sx_3)$$ for all $x \in A$. Since $$\D^2(x_m)=x_m \ot 1 \ot 1+ g_m \ot x_m \ot 1 + g_m \ot g_m \ot x_m$$ one has
\begin{equation*}(ad(f)(f'))(x_m)  =  f_1(x_m)f'(1)f_2(1)+f_1(g_m)f'(x_m)f_2(1)+f_1(g_m)f'(g_m)f_2(Sx_m)=\end{equation*}
\begin{equation*}=f(x_m)\eps(f')+f(g_m)f'(x_m)+f(g_mS(x_m))f'(g_m)\end{equation*}
Suppose moreover that $f(g_m)=f'(g_m)=0$ and $\eps(f')=0$. Then $(ad(f)(f'))(x_m)=0$. Clearly $f=\xi_i$ and $f'=ad(\xi_i)^{-a_{ij}}(\xi_j)$ satisfy the above conditions, thus $z(x_m)=0$.
\end{proof}
\begin{prop}Let $A=u(\mc{D},\;0,\;\mu)$ as above and $H$ be the subgroup of $G$ generated by the elements $<g_i^{N_i}\;|\;\mu_i\neq 0>$. Then $G(A^*)=\widehat{G/H}$.
\end{prop}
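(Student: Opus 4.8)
The plan is to use that $G(A^{*})$ is exactly the set of $k$-algebra maps $A\to k$, and to determine which characters of $G$ extend to such a map. First I would observe that any algebra map $\phi\colon A\to k$ is forced to vanish on every generator $x_{i}$: applying $\phi$ to $g_{i}x_{i}g_{i}^{-1}=\chi_{i}(g_{i})x_{i}$ gives $\phi(x_{i})=\chi_{i}(g_{i})\phi(x_{i})$ with $\chi_{i}(g_{i})\neq 1$. Since each root vector $x_{\al}$ is an iterated braided commutator of the $x_{i}$, it is homogeneous of nonzero degree in the $\Z^{\teta}$-grading of $U(\mc D,0)$, so every PBW monomial $y_{_{\ul u}}g$ with $\ul u\neq 0$ is a sum of products each containing at least one factor $x_{i}$; hence $\phi$ annihilates all of them and is determined by the character $\chi:=\phi|_{G}\in\widehat G$. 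This embeds $G(A^{*})$ into $\widehat G$.

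Conversely, given $\chi\in\widehat G$, I would check when the rule $g\mapsto\chi(g)$, $x_{i}\mapsto 0$ is an algebra map. It respects all the relations of $U(\mc D,0)$ --- the relations of $G$ since $\chi$ is multiplicative, $g x_{i}g^{-1}=\chi_{i}(g)x_{i}$ since both sides go to $0$, and the relations $\ad_{c}(x_{i})^{1-a_{ij}}(x_{j})=0$, $\ad_{c}(x_{i})(x_{j})=0$ since their left-hand sides are homogeneous of nonzero degree --- so it extends to an algebra map $\Phi_{\chi}\colon U(\mc D,0)\to k$, which kills every $x_{\al}$. Thus $\Phi_{\chi}$ descends to $A=U(\mc D,0)/(x_{\al}^{N_{\al}}-u_{_{\al}}(\mu)\,:\,\al\in\Phi^{+})$ precisely when $\chi(u_{_{\al}}(\mu))=0$ for all $\al\in\Phi^{+}$. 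So the image of $G(A^{*})$ in $\widehat G$ is $\{\chi\in\widehat G\,:\,\chi(u_{_{\al}}(\mu))=0\ \text{for all}\ \al\in\Phi^{+}\}$.

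It then remains to identify this set with the characters of $G$ trivial on $H=\langle g_{i}^{N_{i}}\mid\mu_{i}\neq 0\rangle$, i.e. with $\widehat{G/H}$. For this I would use the explicit description of the central elements $u_{_{\al}}(\mu)\in kG$ from \cite{andr}: $u_{_{\al_{i}}}(\mu)=\mu_{i}(g_{i}^{N_{i}}-1)$ for a simple root, and, for an arbitrary $\al\in\Phi^{+}$, $u_{_{\al}}(\mu)$ lies in the ideal of $kG$ generated by $\{g_{i}^{N_{i}}-1\mid\mu_{i}\neq 0\}$ (a refinement of the fact recalled in Section \ref{Halg}). Then $\chi|_{H}=1$ forces $\chi$ to kill each $g_{i}^{N_{i}}-1$ with $\mu_{i}\neq 0$, hence the whole ideal they generate, hence every $u_{_{\al}}(\mu)$; and conversely $0=\chi(u_{_{\al_{i}}}(\mu))=\mu_{i}(\chi(g_{i})^{N_{i}}-1)$ with $\mu_{i}\neq 0$ forces $\chi(g_{i}^{N_{i}})=1$, i.e. $\chi$ trivial on the generators of $H$. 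Since $\{\chi\in\widehat G:\chi|_{H}=1\}$ is canonically $\widehat{G/H}$, this finishes the argument.

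I expect the main obstacle to be this last step: it needs the precise structure of the elements $u_{_{\al}}(\mu)$ for non-simple $\al$ --- that each is assembled out of the $g_{i}^{N_{i}}-1$ with $\mu_{i}\neq 0$ --- which is slightly finer than what is quoted in Section \ref{Halg} and must be extracted from the construction in \cite{andr}. Everything else is routine: checking that ``extend $\chi$ by zero'' is an algebra map is immediate from the PBW basis and the $\Z^{\teta}$-grading of $U(\mc D,0)$, and identifying characters vanishing on $H$ with $\widehat{G/H}$ is standard.
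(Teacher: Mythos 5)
Your proof is correct and follows essentially the same route as the paper: a grouplike of $A^*$ must kill each $x_i$ because $\chi_i(g_i)\neq 1$, hence kills $x_i^{N_i}=\mu_i(g_i^{N_i}-1)$, forcing $\chi(g_i^{N_i})=1$ when $\mu_i\neq 0$; and conversely a character trivial on $H$ extends by zero to an algebra map because it annihilates the ideal $\mc{I}$ generated by the $u_{_{\alpha}}(\mu)$. The only differences are organizational (you lift to $U(\mc{D},0)$ and descend to the quotient, while the paper verifies multiplicativity directly on the PBW basis), and you correctly flag the one point the paper leaves implicit, namely that each $u_{_{\alpha}}(\mu)$ lies in the ideal of $kG$ generated by the elements $g_i^{N_i}-1$ with $\mu_i\neq 0$, which must be read off from the construction in \cite{andr}.
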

\begin{proof}
If $\ch \in G(A^*)$ then relation $gx_ig^{-1}=\ch_i(g)x_i$ implies $\ch(x_i)=0$ for all $1 \leq i \leq \theta$. Thus $\ch(\mu_i(g_i^{N_i}-1))=\ch(x_i^{N_i})=0$ and since $\mu_i \neq 0$ it follows that $\ch(g_i^{N_i})=1$ and $\ch \in \widehat{G/H}$. Conversely, suppose $\ch \in \widehat{G/H} \subset \widehat{G}$ and extend $\ch$ to an element in $A^*$ as at the beginning of this section. The equation $\ch(ab)=\ch(a)\ch(b)$ will be verified on the basis elements of $A$. Suppose $a=y_{_{\ul{u}}}g$ and $b=y_{_{\ul{v}}}h$.
If $\ul{u}\neq 0$ or $\ul{v}\neq 0$ then clearly $\ch(a)\ch(b)=0$. On the other hand $\ch(ab)=\ch(y_{_{\ul{u}}}y_{_{\ul{v}}}gh)\ch_{ _{\ul{v}}}(g)=0$
since the part of degree zero of the product $y_{_{\ul{u}}}y_{_{\ul{v}}}$ is in $\mtc{I}$ and by its definition $\ch|_{ _{\mtc{I}}}=0$.
If $\ul{u}=0$ and $\ul{v}=0$ then the equation $\ch(ab)=\ch(a)\ch(b)$ is satisfied since $\ch$ is a character of $G$.
\end{proof}
Let $\overline{A^*}$ the subalgebra of $A^*$ generated by $(\xi_i)_{1 \leq i \leq \teta}$. It follows that $\overline{A^*}$ is the Nichols algebra of the $\widehat{G}$ braided vector space $W$ with basis given by $Y_i \in W^{\hat{g_i}}_{\chi_i}$.
Similarly to the construction for $A$, for any $\al \in \Phi^{+}$ let $Y_{\al}$ be the corresponding iterated commutators of $\xi_i$. Denote these elements with $Y_1,\cdots, Y_p$ using the convex ordering of the positive roots. Clearly $Y_{m_i}=\xi_i$ for all $\leq i \leq \teta$.

Let $\mc{\widetilde{D}}=(\widehat{G},\;(\chi_i)_{1 \leq i \leq \teta},\;{(\hat{g_i})}_{1 \leq i \leq \teta}),\;(a_{ij})_{1 \leq i,j \leq \teta})$. It can be verified that is a datum of finite Cartan type associated to the abelian group $\widehat{G}$.
\begin{cor}\label{b*} If $A \cong u(\mc{D},\;0,\;0)$ is a pointed Hopf algebra of nilpotent type then $A^*=u(\mc{\widetilde{D}},\;0,\;0)$ is also a pointed Hopf algebra. A basis for $A^*$ is given by $\{\chi Y_{_{\ul{u}}}\; |\;\;\ul{u} \in \mathbb{N}^p, \;0 \leq u_i \leq N_{\beta_i}-1,\;\chi \in \widehat{G}\}$.
\end{cor}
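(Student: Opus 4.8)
The plan is to construct an explicit Hopf algebra isomorphism $\phi\colon u(\mc{\widetilde{D}},\,0,\,0)\to A^*$ which is the identity on $\widehat G$ and sends the generator of $u(\mc{\widetilde{D}},\,0,\,0)$ attached to the node $i$ to $\xi_i\in A^*$, and then to read off the basis from the PBW basis of $u(\mc{\widetilde{D}},\,0,\,0)$. Since $A$ is of nilpotent type we have $\mu=0$, so the subgroup $H$ of the previous proposition is trivial and $G(A^*)=\widehat G$, which is what makes the identity assignment on the group part sensible. First I would check that the images of the generators satisfy the defining relations of $U(\mc{\widetilde{D}},\,0)$: the relations of $\widehat G$ are automatic; $\chi\xi_i\chi^{-1}=\hat{g_i}(\chi)\,\xi_i=\chi(g_i)\,\xi_i$ is Proposition~\ref{part*}(2); the coproducts $\D(\xi_i)=\xi_i\ot1+\chi_i\ot\xi_i$ and $\D(\chi_i)=\chi_i\ot\chi_i$ (the latter established inside the proof of Proposition~\ref{part*}) give the coalgebra structure; and the quantum Serre relations $ad_c(\xi_i)^{1-a_{ij}}(\xi_j)=0$ for $i\sim j$, $i\ne j$, together with the linking relations $ad_c(\xi_i)(\xi_j)=0$ for $i\nsim j$ (vacuous because $\lam=0$), are special cases of Proposition~\ref{part*}(4); here, exactly as in the remark made for $A$, one uses that on the subalgebra generated by the $\xi_i$ the braided adjoint action $ad_c(\xi_i)$ coincides with the ordinary adjoint action $ad(\xi_i)$ of $A^*$, both being $y\mapsto\xi_i y-(\chi_i\rh y)\xi_i$ by Proposition~\ref{part*}(2). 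This defines $\phi$ on $U(\mc{\widetilde{D}},\,0)$; and since $\overline{A^*}$ has already been identified with the Nichols algebra $B(W)$ of finite Cartan type, its root vectors satisfy $Y_{\al}^{N_{\al}}=0$, so $\phi$ kills the generators $x_{\al}^{N_{\al}}$ of the defining Hopf ideal and descends to a Hopf algebra morphism $\phi\colon u(\mc{\widetilde{D}},\,0,\,0)\to A^*$.

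Next I would show that $\phi$ is bijective. The datum $\mc{\widetilde{D}}$ has the same Cartan matrix, root system and convex ordering of positive roots as $\mc D$, and the order of $\hat{g_i}(\chi_i)=\chi_i(g_i)$ is again $N_i$, so the PBW theorem of \cite{andr} applied to $\mc{\widetilde{D}}$ gives $\dim u(\mc{\widetilde{D}},\,0,\,0)=|\widehat G|\prod_{i=1}^{p}N_{\beta_i}=|G|\prod_{i=1}^{p}N_{\beta_i}=\dim A=\dim A^*$; hence it is enough to prove $\phi$ surjective, i.e.\ that $A^*$ is generated as an algebra by $\widehat G$ and $\xi_1,\dots,\xi_{\teta}$. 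For this I would use the behaviour of the coradical filtration under duality: $A\cong kG\#B(V)$ is a biproduct of a Nichols algebra with a group algebra, hence is coradically graded and is generated in degree one, and therefore (by the standard duality between these two properties, cf.\ \cite{andr}) $A^*$ is coradically graded \emph{and} is generated by $(A^*)(0)\oplus(A^*)(1)$. Since $(A^*)(0)=(kG)^*=k\widehat G$ and $(A^*)(1)$ is spanned by the elements $\chi\xi_i$ with $\chi\in\widehat G$ and $1\le i\le\teta$, the algebra $A^*$ is generated by $\widehat G$ and the $\xi_i$, so $\phi$ is onto and hence an isomorphism; and $\mr{corad}(A^*)=(A^*)(0)=k\widehat G$ is a group algebra, so $A^*$ is pointed. (Conceptually, $\phi$ realises the standard isomorphism $(kG\#B(V))^*\cong k\widehat G\#B(V)^*$ together with the self-duality $B(V)^*\cong B(W)$ of Nichols algebras of finite Cartan type.) Finally, transporting the PBW basis $\{Y_{_{\ul u}}\chi\mid 0\le u_i\le N_{\beta_i}-1,\ \chi\in\widehat G\}$ of $u(\mc{\widetilde{D}},\,0,\,0)$ through $\phi$, and noting that $\chi Y_{_{\ul u}}=\bigl(\prod_{j}\chi(g_{\beta_j})^{u_j}\bigr)Y_{_{\ul u}}\chi$ is a nonzero scalar multiple of $Y_{_{\ul u}}\chi$, gives the stated basis $\{\chi Y_{_{\ul u}}\}$ of $A^*$.

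The step I expect to be the main obstacle is the surjectivity of $\phi$. Proposition~\ref{part*} and the identification $\overline{A^*}\cong B(W)$ already exhibit a copy of $u(\mc{\widetilde{D}},\,0,\,0)$ inside $A^*$, and the genuine content is ruling out a dimension collapse, i.e.\ proving that the group part $k\widehat G$ and the Nichols part $\overline{A^*}$ together span $A^*$. The coradical-filtration argument above is the cleanest route; a more computational alternative is to put on $A^*$ the grading dual to the $\Z^{\teta}$-grading of $A$ (so $\xi_i$ has degree $\ul e_{\,i}$ and $\widehat G$ degree $0$) and to verify degree by degree that the $Y_{_{\ul u}}\chi$ are linearly independent, by pairing them against the PBW basis of $A$ and using that the comultiplication of $A=u(\mc D,0,0)$ preserves the grading — but this amounts to re-proving the same fact by hand.
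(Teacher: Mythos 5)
Your proof is correct and follows the same route as the paper: identify $A^*$ with $k\widehat{G}\#B(W)\cong u(\mc{\widetilde{D}},\,0,\,0)$ using the relations of Proposition \ref{part*}, the identification of $\overline{A^*}$ with the Nichols algebra $B(W)$, and the description of $G(A^*)$ from the preceding proposition. The paper's own proof is essentially a two-line assertion of this isomorphism; your explicit surjectivity argument (the duality between ``coradically graded'' and ``generated in degree one'' applied to $A\cong kG\#B(V)$, combined with the dimension count $\dim u(\mc{\widetilde{D}},\,0,\,0)=|G|\prod_i N_{\beta_i}=\dim A^*$) supplies the non-collapse step that the paper leaves implicit, and is the right way to make the argument complete.
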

\begin{proof} If $\mu=0$ and $\lam=0$ then $A$ is a $\mathbb{Z}^{\teta}$-graded Hopf algebra and any $\chi \in \widehat{G}$ extended to $A^*$ as in the beginning of this section becomes a grouplike element of $A^*$. The previous theorem implies that $A^*\cong k\widehat{G}\#B(W)$ and the basis description follows from \cite{andr}.
\end{proof}

\begin{rem} In a recent paper \cite{MG} it was proved that all the liftings of $B(V)\#kG$ where $G$ is an abelian group whose order has no prime divisors $< 11$ are monoidally Morita-Takeuchi equivalent and therefore cocycle deformations of $B(V)\#kG$. Thus their dual algebras are the same and Corollary \ref{b*} remains true for any lifting of $B(V)\#kG$. Thus if $A=u(\mc{D},\;\lam,\;\mu)$ then $A^* \cong u(\mc{\widetilde{D}},\;0,\;0)$ as algebras for any $\lam$ and $\mu$.
\end{rem}
\section{The quantum double of $A$}\label{qdouble}

 Let $A=u(\mc{D},\;0,\;\mu)$ as in the previous section.
\begin{prop} \label{defning}The following relations hold in $D(A)$:
\begin{enumerate}
\item $g\xi_ig^{-1}=\chi_i^{-1}(g)\xi_i$ for all $g \in G$.
\item $g\gamma=\gamma g$ for any $g \in G$ and $\gamma \in \widehat{G}$.
\item $x_i\xi_j=\xi_jx_i$ for $i \neq j$.
\item $[x_i,\; \xi_i]=\chi_i-g_i$ for all $1 \leq i \leq \teta$.
\item If $\gamma \in G(A^*)$ then $\gamma^{-1}x_i\gamma=\gamma(g_i) x_i$ for all $1 \leq i \leq \teta$.
\end{enumerate}
\end{prop}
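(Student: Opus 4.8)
The plan is to derive all five identities from the single straightening relation that controls multiplication in the quantum double $D(A)=A^{*\,cop}\bowtie A$: for $x\in A$ and $p\in A^{*}$,
\begin{equation*}
x\cdot p=\sum\; \langle p_{(1)},x_{(1)}\rangle\; p_{(2)}\,x_{(2)}\;\langle p_{(3)},S^{-1}(x_{(3)})\rangle ,
\end{equation*}
where $p_{(1)}\ot p_{(2)}\ot p_{(3)}$ denotes the iterated \emph{opposite} coproduct of $p$ in $A^{*}$ and $\langle\,,\,\rangle$ is the evaluation pairing. First I would assemble the data needed to evaluate the right-hand side: in $A$, the coproducts $\Delta(g)=g\ot g$ and $\Delta(x_i)=x_i\ot 1+g_i\ot x_i$ and the antipodes $S(g)=g^{-1}$, $S(x_i)=-g_i^{-1}x_i$, so that $S^{-1}(x_i)=-x_ig_i^{-1}$; in $A^{*}$, the coproducts $\Delta(\xi_i)=\xi_i\ot 1+\chi_i\ot\xi_i$ (Proposition \ref{part*}(1)), where the $\chi_i$ are grouplike, and $\Delta(\gamma)=\gamma\ot\gamma$ for any grouplike $\gamma$ of $A^{*}$; and the pairing values $\langle\xi_i,x_jg\rangle=\delta_{ij}$, $\langle\xi_i,g\rangle=0$, $\langle\chi,y_{\underline{u}}g\rangle=\delta_{\underline{u},0}\,\chi(g)$ for $\chi\in\widehat{G}$, together with $\xi_i(x_ia)=\eps(a)$ for $a\in kG$. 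Iterating the coproducts yields three summands on each side, so every product below is a sum of at most nine terms, nearly all of which vanish for pairing reasons.

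For relations (1), (2) and (5) the element of $A$ involved is $g\in G$, which is grouplike, or $x_i$, whose coproduct legs are grouplike apart from one copy of $x_i$; pairing the two outer legs against these pieces collapses the sum to a single survivor and gives $g\xi_i=\chi_i^{-1}(g)\,\xi_ig$, $g\gamma=\gamma(g)\gamma(g^{-1})\gamma g=\gamma g$, and $x_i\gamma=\gamma(g_i)\,\gamma x_i$, which are precisely (1), (2), (5) after conjugating. For (3) one computes $x_i\xi_j$ with $i\neq j$: since $\langle\xi_j,x_i\rangle=\delta_{ij}=0$ and $\langle\xi_j,g_i\rangle=0$, every term in which the $\xi_j$-leg gets paired dies, while $\chi_j$ pairs nontrivially only with grouplike legs; the scalars $\chi_j(g_i)$ and $\chi_j(g_i)^{-1}$ that then occur cancel, leaving $x_i\xi_j=\xi_jx_i$. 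The only computation with real content is (4). Expanding $x_i\xi_i$, the term with the $\xi_i$-leg paired trivially and $x_i$ in the middle contributes $\xi_ix_i$; the term pairing $\langle\chi_i,g_i\rangle$ at one end against the middle copy of $g_i$ and $S^{-1}(x_i)=-x_ig_i^{-1}$ at the other contributes $-g_i$ (the sign from the antipode, using $\xi_i(x_ig_i^{-1})=\eps(g_i^{-1})=1$); and the term pairing $\langle\xi_i,x_i\rangle=1$ at the first leg contributes $\chi_i$. The remaining six terms vanish, giving $x_i\xi_i=\xi_ix_i+\chi_i-g_i$, i.e.\ $[x_i,\xi_i]=\chi_i-g_i$. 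This is the exact analogue of the commutator $[E_i,F_i]=(K_i-K_i')/(r_i-s_i)$ in the restricted two-parameter quantum groups, and of Kauffman--Radford's computation for the double of a Taft algebra.

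I expect the only genuine obstacle to be bookkeeping with the conventions: one must pin down which tensor factor of $D(A)$ carries the opposite coalgebra structure and where $S$ rather than $S^{-1}$ sits in the straightening relation, so that the scalars $\chi_i(g_i)^{\pm1}$ cancel as required in (3) and the sign in (4) comes out as $+\chi_i-g_i$ rather than its negative. Once the convention is fixed, each of the five identities reduces to a finite, mechanical evaluation of the nine-term sum with the pairing values recorded above, with no further input required.
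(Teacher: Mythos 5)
Your proposal is correct and follows essentially the same route as the paper: the paper also starts from the straightening relation $af=(a_1\rightharpoonup f\leftharpoonup S^{-1}a_3)a_2$, uses $\Delta^2(x_i)=x_i\ot1\ot1+g_i\ot x_i\ot1+g_i\ot g_i\ot x_i$ and $S^{-1}(x_i)=-x_ig_i^{-1}$, and reduces each identity to a handful of surviving terms. The only organizational difference is that the paper evaluates the hit actions $x_i\rightharpoonup\xi_j$ and $\xi_j\leftharpoonup x_i$ directly on the PBW basis (re-running the degree/ideal argument), whereas you shortcut this by invoking the already-established coproduct $\Delta(\xi_j)=\xi_j\ot1+\chi_j\ot\xi_j$ to turn everything into finitely many scalar pairings; both are valid since that degree argument is exactly what underlies Proposition \ref{part*}(1).
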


\begin{proof}
One has that $$af=(a_1 \;\; \rightharpoonup\;\; f \;\;\leftharpoonup \;\;S^{-1}a_3)a_2$$
for all $a \in A$ and $f \in A^*$. For the first formula notice that $g\xi_i=(g \rh \xi_i \lh g^{-1})g$ and $g \rightharpoonup \xi_i \leftharpoonup g^{-1} =\chi_i^{-1}(g)\xi_i$. Similarly $g\gamma=(g \rh \gamma \lh g^{-1})g$ and $g \rh \gamma=\gamma(g)\gamma$ while $\gamma \lh g^{-1}=\gamma(g^{-1})\gamma$. Thus the second formula is proved.

To prove relations 3) and 4) notice that $$\Delta^{2}(x_i)=g_i \ot x_i \ot 1 + x_i \ot 1 \ot 1+g_i \ot g_i \ot x_i$$ Then $x_if= (g_i \rightharpoonup f)x_i+x_i \rightharpoonup f+(g_i \rightharpoonup f \leftharpoonup S^{-1}x_i)g_i$, for all $f \in A^*$.

Since $S^{-1}x_i=-x_ig_i^{-1}$ this last formula becomes \begin{equation}\label{harp}x_if= (g_i \rightharpoonup f)x_i+x_i \rightharpoonup f-(g_i \rightharpoonup f \leftharpoonup x_i\lh g_i^{-1})g_i\end{equation}
If $f=\xi_j$ with $j \neq i$ then $g_i \rh \xi_j=\xi_j$ and the first term of the above  equality is $\xi_jx_i$. On the other hand the other two terms are zero since $x_i\rh \xi_j=\xi_j \lh x_i=0$. Indeed $(x_i \rh \xi_j)(y_{_{\ul{u}}}g)=\xi_j(y_{_{\ul{u}}}gx_i)=\chi_i(g)\xi_j(y_{_{\ul{u}}}x_ig)=\chi_i(g)\xi_j(y_{_{\ul{u}}}x_i)$. Since $i \neq j$ one has $\mtr{dim}(y_{_{\ul{u}}}x_i) \neq \mtr{dim}(x_j)$. Then the product $y_{_{\ul{u}}}x_i$ has a term of the type $x_ja_j$ with $a_j \in kG$ in its writing as linear combination of the standard basis (after putting all terms $x_jg$ together) only by using the factorizing relations. Thus in this situation $a_j \in \mc{I}$ and $\eps(a_j)=0$ which implies that $\xi_j(x_ja_j)=0$. Similarly, $\xi_j \lh x_i=0$ and the third relation is proved.

For the next relation suppose that $f=\xi_i$. Then $g_i \rh \xi_i=\xi_i$ and the first term of the above equality is $\xi_ix_i$. On the other hand $x_i \rh \xi_i=\chi_i$ since $(x_i \rh \xi_i)(y_{_{\ul{u}}}g)=\xi_i(y_{_{\ul{u}}}gx_i)=\chi_i(g)\xi_i(y_{_{\ul{u}}}x_ig)=\chi_i(g)\xi_i(y_{_{\ul{u}}}x_i)$ and if $\ul{u} \neq 0$ (which means that $\mr{dim}( y_{_{\ul{u}}}) \neq  0$) then as before this term is zero. If $\ul{u}= 0$ which means $ y_{_{\ul{u}}}=1$ then $(x_i \rh \xi_i)(y_{_{\ul{u}}}g)=(x_i \rh \xi_i)(g)=\xi_i(gx_i)=\chi_i(g)$. Thus the second term of equation \ref{harp} is $\chi_i$. The last term, $-(g_i \rh \xi_i \lh x_i \lh g_i^{-1})g_i$ is equal to -$g_i$ since $g_i \rh \xi_i=\xi_i$, $\xi_i \lh x_i=\eps$ and $\eps \lh g_i^{-1}=\eps$. The proof for $\xi_i \lh x_i=\eps$ is similar to the one of $x_i \rh \xi_i=\chi_i$. The proof of 4 is now complete.

For the last relation put $f=\gamma$ in \ref{harp}. One has $g_i \lh \gamma =\gamma(g_i)\gamma$. The other two terms are zero since $x_i \rh \gamma=\gamma \lh x_i=0$. The proof of these facts is similar to the one in part 3). One uses that $\gamma$ is zero on $\mc{I}$ since $\gamma \in G(A^*)$.
\end{proof}


Let $A = u(\mc{D},\;0,\;0)$ with $\mc{D}=(G,\;(g_i)_{1 \leq i \leq \theta},\;(\ch_i)_{1 \leq i \leq \theta},\;(a_{ij})_{1 \leq i,j \leq \theta})$ a Cartan datum of finite type. Using Proposition \ref{defning} and formula \ref{newserre} from  Appendix the following relations hold in $D(A)$:
\begin{equation}x_i^{N_i}=(\xi_i\ch_i^{-1})^{N_i}=0\end{equation}
\begin{equation}(g\ch)x_i(g\ch)^{-1}=<\ch_i\hat{g_i}^{-1},\;g\ch>x_i\end{equation}
\begin{equation}(g\ch)(\xi_i\ch_i^{-1})(g\ch)^{-1}=<\ch_i^{-1}\hat{g_i},\;g\ch>\xi_i\end{equation}
\begin{equation}ad(x_i)^{1-a_{ij}}(x_j)=0\end{equation}
\begin{equation}ad(\xi_i\ch_i^{-1})^{1-a_{ij}}(\xi_j\ch_j^{-1})=0\end{equation}
\begin{equation} \label{linkingrel}ad(x_i)(\xi_i\ch_i^{-1})=(1-g_i\ch_i^{-1})\end{equation}
\begin{equation} \D(x_i)=x_i \ot 1+g_i\ot x_i\end{equation}
\begin{equation}\D(\xi_i\ch_i^{-1})=\ch_i^{-1}\ot \xi_i\ch_i^{-1}+\xi_i\ch_i^{-1}\ot 1\end{equation}
To verify the relation  \ref{linkingrel} one has $$ad(x_i)(\xi_i\ch_i^{-1})=x_i\xi_i\ch_i^{-1}-g_i\xi_i\ch_i^{-1}g_i^{-1}x_i=(x_i\xi_i-\xi_ix_i)\ch_i^{-1}=(1-g_i\ch_i^{-1})$$

Consider $\mathcal{D'}=(G \times \widehat{G},\;(a_i)_{1 \leq i \leq 2\theta},\;(\mu_i)_{1 \leq i \leq 2\theta},\;(b_{ij})_{1 \leq i,j \leq 2\theta})$ where $a_i=g_i$, $a_{\teta+i}= \ch_i^{-1}$  and $\mu_i=\chi_ia_i^{-1}$, $\mu_{\theta+i}=\chi_i^{-1}a_i$ for all $1 \leq i \leq \teta$.

The matrix $(b_{ij})$ consists of two diagonal copies of the matrix $a_{ij}$. It can easily be verified that $\mc{D'}$ is also a datum of finite Cartan type associated to the abelian group $G \times \widehat{G}$. (The character group of $G\times \widehat{G}$ is  identified with $\widehat{G}\times G$.)

Define the linking parameters $\lam$ given by \\$\lam_{ij}=\begin{cases}1,\;\;\;\;\;\;\;j=i+\teta\\0,\;\;\;\;\;\;\;j \neq i+\teta\end{cases}$.

If the generating variables of $U(\mc{D'},\;\lam)$ are denoted by $z_i$ then define $$\phi: U(\mc{D'},\;\lam) \ra D(A)$$ by $$\phi(g\chi)=g\chi,\;\;\phi(z_i)=x_i,\;\;\phi(z_{\theta+i})=\xi_i\chi_i^{-1}$$ for all $g\in G$ and $\chi \in \widehat{G}$ and for all $1 \leq i \leq \teta$.

Relations (3.3)-(3.8) show that $\phi$ is a well defined algebra map and relations (3.9)-(3.10) imply that $\phi$ is a Hopf algebra map. In the next corollary it is proved that $\phi$ induces an isomorphism of Hopf algebras $\phi: u(\mc{D'},\;\lam)\ra D(A)$. (See also \cite{MBe}.)
\begin{cor}Let $A = u(\mc{D},\;0,\;0)$  be a pointed Hopf algebra with $\mc{D}=(G,\;(g_i)_{1 \leq i \leq \theta},\;(\ch_i)_{1 \leq i \leq \theta},\;(a_{ij})_{1 \leq i,j \leq \theta})$ a Cartan datum of finite type .

 1) Let $\mathcal{D'}=(G \times \widehat{G},\;(a_i)_{1 \leq i \leq 2\theta},\;(\mu_i)_{1 \leq i \leq 2\theta},\;(b_{ij})_{1 \leq i,j \leq 2\theta})$ and $\lam$ defined as above. Then $D(A)\cong u(\mc{D'},\;\lam)$.

 2) The quantum double $D(A)$ is generated by $G, \;\widehat{G}$, $x_i,\;\xi_i$, $(i=\overline{1, \; \teta})$ with the defining relations given by those of $A$ , $A^*$ and the relations from Proposition \ref{defning}.
\end{cor}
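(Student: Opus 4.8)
The plan is to upgrade the Hopf algebra map $\phi\colon U(\mc{D'},\;\lam)\ra D(A)$ constructed above to an isomorphism $u(\mc{D'},\;\lam)\cong D(A)$ in three steps: $\phi$ is surjective, it factors through the finite dimensional quotient $u(\mc{D'},\;\lam)$, and the induced map is bijective by a dimension count. Surjectivity is immediate: $\im\phi$ contains $G$ and $\widehat{G}$ (put $\chi=\eps$, respectively $g=1$, in $\phi(g\chi)=g\chi$), every $x_i=\phi(z_i)$, every $\xi_i\ch_i^{-1}=\phi(z_{\teta+i})$, hence also every $\xi_i=(\xi_i\ch_i^{-1})\ch_i$; since $D(A)$ is generated as an algebra by its canonical subalgebras $A$ and $A^*$, the first generated by $G$ and the $x_i$ and the second (by Corollary \ref{b*}, where $A^*\cong u(\mc{\widetilde{D}},\;0,\;0)$) by $\widehat{G}$ and the $\xi_i$, it follows that $\phi$ is onto.

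To see that $\phi$ kills the extra relations defining $u(\mc{D'},\;\lam)$, one must check $\phi(z_\al^{N_\al})=0$ for every positive root $\al$ of $\mc{D'}$. Since $(b_{ij})$ is the direct sum of two copies of $(a_{ij})$, the positive roots of $\mc{D'}$ form two disjoint copies of $\Phi^{+}$ with unchanged orders $N_\al$, and the root vectors of $\mc{D'}$ are the iterated braided commutators of the $z_i$ within each block. As $\phi$ is an algebra map restricting to the identity on the group algebra of $G\times\widehat{G}$, it intertwines these braided commutators; hence it carries the root vector $z_\al$ of the first block to the root vector $x_\al$ of $A=u(\mc{D},\;0,\;0)$, so $\phi(z_\al^{N_\al})=x_\al^{N_\al}=0$, and it carries the root vector $z_{\teta+\al}$ of the second block to the iterated braided commutator of the $\xi_i\ch_i^{-1}$ inside the subalgebra $A^*\cong u(\mc{\widetilde{D}},\;0,\;0)$ of $D(A)$. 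Because $\xi_i\ch_i^{-1}$ differs from $\xi_i$ only by a nonzero scalar and a grouplike of $\widehat{G}$, a routine twisting computation identifies this with a scalar times a grouplike times the root vector $Y_\al$ of $A^*$, and therefore $\phi(z_{\teta+\al}^{N_\al})$ is a scalar multiple of a grouplike times $Y_\al^{N_\al}=0$. Thus $\phi$ descends to a surjection $\bar\phi\colon u(\mc{D'},\;\lam)\ra D(A)$.

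Finally, $\dim D(A)=(\dim A)^2=\bigl(|G|\prod_{\beta\in\Phi^{+}}N_\beta\bigr)^2$, while the PBW basis of \cite{andr} for $u(\mc{D'},\;\lam)$ — whose group $G\times\widehat{G}$ has order $|G|^2$ and whose $2p$ positive roots are two copies of $\Phi^{+}$ — gives $\dim u(\mc{D'},\;\lam)=|G|^2\bigl(\prod_{\beta\in\Phi^{+}}N_\beta\bigr)^2=(\dim A)^2$. A surjection between spaces of equal finite dimension is bijective, so $\bar\phi$ is an isomorphism of Hopf algebras, which is (1). For (2), by construction $u(\mc{D'},\;\lam)$ is generated by $G\times\widehat{G}$ and $z_1,\dots,z_{2\teta}$ subject to the relations of $U(\mc{D'},\;\lam)$ together with the $z_\al^{N_\al}=0$; transporting these along $\bar\phi$ and using Proposition \ref{defning} together with relations (3.3)--(3.10) shows that they coincide with the relations of $A$, the relations of $A^*$, and the relations of Proposition \ref{defning}. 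The only step that takes real work is the twisting computation in the previous paragraph, namely matching the braided commutator structure of the second block of $\mc{D'}$ with that of $\mc{\widetilde{D}}$ modulo the twist by $\ch_i^{-1}$; everything else is a dimension count or a direct appeal to \cite{andr}.
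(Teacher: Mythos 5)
Your proposal is correct and follows essentially the same route as the paper: surjectivity of $\phi$ from the generation of $D(A)$ by $G$, $\widehat{G}$, the $x_i$ and the $\xi_i$ (via Corollary \ref{b*} and the PBW basis), followed by the dimension count $\dim u(\mc{D'},\lam)=|G|^2\bigl(\prod_{\beta\in\Phi^{+}}N_\beta\bigr)^2=(\dim A)^2=\dim D(A)$. The one genuine addition is your explicit check that $\phi$ kills the root-vector relations $z_\al^{N_\al}$ so that it descends to the finite-dimensional quotient — a step the paper leaves implicit — and your twisting argument for the second block (matching iterated commutators of $\xi_i\ch_i^{-1}$ with grouplike multiples of $Y_\al$) is exactly the kind of computation carried out in Proposition \ref{newserre} of the Appendix, so it is a legitimate way to close that gap.
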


\begin{proof} If $A$ is of nilpotent type with no linking relations then $\widehat{G}=G(A^*)$ and the last relation of the Proposition \ref{defning} holds for any $\gamma \in \widehat{G}$. Then it follows from Corollary \ref{b*} and the PBW-basis description of $A$ that as an algebra $D(A)$ is generated $\{g,\; \ch,\; x_i,\; \xi_i, \; |\; g \in G,\; \ch \in \widehat{G}, \; 1\leq i\leq \teta\}$. This implies that the above map $\phi$  is surjective. Let $s$ the number of connected components of the Dynkin diagram of the Lie algebra $g$. Since $\mtr{dim}\;U(\mc{D'},\;\lam)=|G|^2\prod_{i=1}^sN_i^{2p_i}=\mtr{dim}\;D(A)$ it follows that $\psi$ is an isomorphism.\end{proof}

Let $\Gamma$ be an abelian group $n \geq 1$, $K_i,\;L_i \in \Gamma$, $\chi_i \in \widehat{\Gamma}$ for all $1\leq i \leq n$, and $(a_{ij})_{1 \leq i,\;j \leq n}$ a Cartan matrix of finite type. A $reduced \; datum \; of \; Cartan\; finite \;type$ was defined in \cite{RS2}.\\ It consists of a datum $\mtc{D}_{\text{red}}=\mtc{D}_{\text{red}}(\Gamma,\; (L_i)_{1 \leq i \leq n}, \;(K_i)_{1 \leq i \leq n}, \;(\chi_i)_{1 \leq i \leq n}, \; (a_{ij})_{1\ \leq i,\;j\leq n})$ such that: $$\chi_j(K_i)\chi_i(K_j)=\chi_i(K_i)^{a_{ij}},$$  $$\chi_i(L_j)=\chi_j(K_i),$$ $$K_iL_i \neq 1, \text{and}\; \chi_i(K_i) \neq 1$$ for all $1\leq i,j \leq n$.


Let $\mc{D}_{\mtr{red}}$ be a reduced datum of finite Cartan type and $X$ a Yetter-Drinfeld module over $k[\Gamma]$ with basis $x_1,\cdots, x_n,\;y_1,\cdots,y_n$ where $x_i \in X_{L_i}^{\chi_i^{-1}}$ and $y_i \in X_{K_i}^{\chi_i}$. Let $(l_i)_{1\leq i \leq n}$ be a family of nonzero parameters in $k$.

Let $\Gamma$ acting on the free algebra $k<x_1,\;\cdots, x_n,\; y_1,\; \cdots, y_n>$ by $\gamma x_i=\chi_i^{-1}(\gamma)x_i$ and $\gamma y_i=\chi(\gamma)y_i$, for all $\gamma \in \Gamma$ and $1\leq i \leq n$.

The Hopf algebra $U(\mtc{D}_{\text{red}},\;l)$ is defined \cite{RS2} as the quotient of the smash product $k<x_1,\;\cdots, x_n,\;y_1,\cdots,y_n>\#k[\Gamma]$ modulo the ideal generated by
$$ad_c(x_i)^{1-a_{ij}}(x_j) \; \; \text{for all }1\leq i,\;j\leq n,\; i\neq j$$
$$ad_c(y_i)^{1-a_{ij}}(y_j)\;\; \text{for all} 1\leq i,\;j\leq n, \;i\neq j$$
$$x_iy_j-\chi_j(L_i)y_jx_i-\delta_{ij}l_i(1-K_iL_i)\;\; \text{for all}\; 1\leq i,\;j\leq n$$


\begin{example}
\end{example}
This example shows that $D(A)$ is a quotient Hopf algebra of $U(\mtc{D}_{\text{red}},\;l)$ whose representations were studied in \cite{RS2}.

Let $A = u(\mc{D},\;0,\;0)$ with $\mc{D}=(G,\;(g_i)_{1 \leq i \leq \theta},\;(\ch_i)_{1 \leq i \leq \theta},\;(a_{ij})_{1 \leq i,j \leq \theta})$ a Cartan datum of finite type.

Let $\mtc{D}_{\mtr{red}}=\mtc{D}_{{\mtr{red}}}(\Gamma,\; (L_i)_{1 \leq i \leq \teta}, \;(K_i)_{1 \leq i \leq \teta}, \;(\mu_i)_{1 \leq i \leq \teta}, \; (a_{ij})_{1\ \leq i,\;j\leq \teta})$ where $\Gamma=G\times \widehat{G}$, $L_i=g_i $, $K_i= \chi_i^{-1}$ and $\mu_i=\ch_i^{-1}\hat{ g_i}$. ( $\widehat{\Gamma}$ is again identified with $\widehat{G} \times G$.) Let $l_i=\lam_{i,\;i+\teta}=1$ for all $1\leq i \leq n$.
Then from \cite{RS2}, page 27 it follows that $U(\mtc{D}_{red},\;l)=U(\mc{D'},\;\lam)$, thus $D(A)$ is a quotient of $U(\mtc{D}_{red},\;l)$.

\begin{example} \end{example} In the next example we will show that certain quantum doubles can be realized as quotients of two parameter quantum groups. This can be regarded as a generalization of the well known fact (for type $A_1$) that the quantum double of a Taft algebra is a a quotient of $u_{q,\;q^{-1}}(sl_2)$.

Let $C={(a_{ij})}_{1 \leq i,\;j \leq \theta}$ be a Cartan matrix of finite type and $g$ the associated semisimple Lie algebra over $Q$. Let $d_i\in \{1,2,3\}$ be a set of relatively prime positive integers such that $d_ia_{ij}=d_ja_{ji}$ for all ${1 \leq i,\;j\leq \theta}$. Let $r,\;s$ be two rational numbers such that $rs^{-1}$ is a root of unity of odd order $N$ and prime with $3$ if $g$ has components of type $G_2$. One can choose $r,\;s,\;N$ such that $r^N=s^N=1$. Let $r_i=r^{d_i} $ and  $s_i=s^{d_i}$, for $1\leq i \leq \teta$.

Let $<-,->$ be the Euler form of $g$ which is the bilinear form on the root lattice $Q$ defined by $$<i,\;j>:=<\alpha_i,\;\alpha_j>=\begin{cases}d_ia_{ij},\;\;\;\;i<j\\d_i,\;\;\;\;\;\;\;\;i=j\\0,\;\;\;\;\;\;\;\;\;i>j\end{cases}$$

%
To the Lie algebra $g$ and the numbers $r,\;s$ one can associate a two parameter quantum group $U:=U_{r,\;s}(g)$ as in \cite{tpr}. $U_{r,\;s}(g)$ is generated by $e_i, \; f_i,\; \omega_i^{\pm 1}, \; {\omega'_i}^{\pm 1}$ subject to the following relations:
\vskip 0.3cm
R1) ${\omega_i}^{\pm 1}{\omega_j}^{\pm 1}= \omega_j^{\pm 1}\omega_i^{\pm 1},\;\;\;\;{\omega'_i}^{\pm 1}{\omega'_j}^{\pm 1}= {\omega'_j}^{\pm 1}{\omega'_i}^{\pm 1},$
\vskip 0.3cm
$\;\;\;\;\;{\omega_i}^{\pm 1}{\omega'_j}^{\pm 1}= {\omega'_j}^{\pm 1}\omega_i^{\pm 1},\;\;\;\;\;\;{\omega_i}^{\pm 1}{\omega_i}^{\mp 1}= {\omega'_i}^{\pm 1}{\omega'_i}^{\mp 1}=1.$
\vskip 0.3cm

R2) $\omega_ie_j{\omega_i}^{- 1}= r^{<j,\;i>}s^{-<i,\;j>}e_j,\;\;\;\;{\omega'_i}e_j{\omega'_i}^{- 1}= r^{-<i,\;j>}s^{<j,\;i>}e_j.$
\vskip 0.3cm
R3) $\omega_if_j{\omega_i}^{- 1}= r^{-<j,\;i>}s^{<i,\;j>}f_j,\;\;\;\;{\omega'_i}f_j{\omega'_i}^{- 1}= r^{<i,\;j>}s^{-<j,\;i>}f_j.$
\vskip 0.3cm
R4) $e_if_j-f_je_i=\delta_{i,\;j}\frac{\omega_i-\omega'_i}{r_i-s_i}.$
\vskip 0.3cm
R5) $\sum_{k=0}^{1-a_{ij}}(\begin{matrix}1-a_{ij}\\k\end{matrix})_{r_is_i^{-1}}c_{ij}^{(k)}e_i^{1-a_{ij}-k}e_je_i^k=0\;\;\text{if}\;\; i\neq j.$
\vskip 0.3cm
R6) $\sum_{k=0}^{1-a_{ij}}(\begin{matrix}1-a_{ij}\\k\end{matrix})_{r_is_i^{-1}}c_{ij}^{(k)}f_i^kf_jf_i^{1-a_{ij}-k}=0\;\;\text{if}\;\; i\neq j.$

where $c_{ij}^{(k)}=(r_is_i^{-1})^{k(k-1)/2}r^{k<j,\;i>}s^{-k<i,\;j>}$, for $i \neq j$ and ${n \choose k}_q$ is the quantum binomial coefficient, see Section \ref{Ap}.
$U_{r,\;s}(g)$ is a Hopf algebra with the comultiplication given by $$\D(\omega_i^{\pm 1})=\omega_i^{\pm 1} \ot \omega_i^{\pm 1},\;\;\; \D({\omega'_i}^{\pm 1})={\omega'}_i^{\pm 1} \ot {\omega'}_i^{\pm 1},$$$$\D(e_i)=e_i\ot 1+\omega_i\ot e_i,\;\;\;\;\D(f_i)=f_i\ot {\omega'}_i+ 1\ot f_i.$$ The counit is given by $$\eps(\omega_i^{\pm 1})=1,\;\;\eps({\omega'_i}^{\pm 1})=1,\;\; \eps(e_i)=\eps(f_i)=0$$ and the antipode is given by $$S(\omega_i^{\pm 1})=\omega_i^{\mp 1},\;\; S({\omega'_i}^{\pm 1})={\omega'_i}^{\mp 1},\;\;\;S(e_i)=-{\omega_i}^{-1}e_i,\;\;S(f_i)=-f_i{\omega'_i}^{-1}.$$
Let $G=\prod_{i=1}^{\teta}\mathbb{Z}_{N}$ and $g_1, g_2,\;\cdots, g_{\theta}$ be generators of each component of the product. Define $\chi_i\in \widehat{G}$ by $\chi_i(g_j)=r^{<i,\;j>}s^{-<j,\;i>}$. It can be checked that $\chi_i$ are well defined and $\mc{\widetilde{D}}=( G,\;{({g_i})}_{1 \leq i \leq \teta},\; (\chi_i)_{1 \leq i \leq \teta},\;(a_{ij})_{1 \leq i,j \leq \teta})$ is a Cartan datum of finite type.
Let $A=u(\mc{D}):=u(\mc{D},\;0,\;0)$. We will show that $D(A)$ is a quotient of the two parameter quantum group $U_{r,\;s}(g)$. Define $\psi: U_{r,\;s}(g) \ra u(\mc{D})$ by $\psi(e_i)=\frac{1}{{(s_i-r_i)}^{\frac{1}{2}}}x_i$, $\psi(f_i)=\frac{1}{{(s_i-r_i)}^{\frac{1}{2}}}\xi_i$, $\psi(\omega_i)=g_i$, $\psi(\omega'_i)=\chi_i$.  Using formula \ref{exp} from Appendix it can be checked that $\psi $ is well defined. The relations  R5) and R6) are sent to $0$ by $\phi$ since $ad_{_{A}}(x_i)^{1-a_{ij}}(x_j)=0$ and  respectively $ad_{_{A^*}}(\xi_i)^{1-a_{ij}}(\xi_j)=0$. It can be checked that $\psi$ is a Hopf algebra map. Clearly the ideal of $U$ generated by $<e_i^N,\;f_i^N,\;{\omega_i}^N-1, \; {\omega'_i}^N-1>$ is contained in the kernel of $\psi$. A dimension argument implies that $\mr{ker}(\psi)=<e_i^N,\;f_i^N,\;{\omega_i}^N-1, \; {\omega'_i}^N-1>$.

\section{Braided Hopf algebras}\label{braided}
Let $H$ be a finite dimensional Hopf algebra and $R \in ^H_H\mtc{YD}$ be a Yetter-Drinfeld module over $H$.

Recall that $R$ is called a braided Hopf algebra in $^H_H\mtc{YD}$ if it is an algebra and coalgebra such that the comultiplication and counit are morphisms in $^H_H\mtc{YD}$.

Let $A$ and $H$ be Hopf algebras and $p:A\ra H$ and $j:H\ra A$ Hopf algebra homomorphisms such that $pj=\mtr{id}_H$
Let
\bn{equation}
R:=A^ {\;co\; H}=\{a \in A\;|\;(\mtr{id}\ot p)\D(a)=a\ot 1\}
\end{equation}
Then $R$ is a braided Hopf algebra in $^H_H\mtc{YD}$ with the following structures:
\begin{enumerate}
\item $H$ acts on $R$ via the adjoint action.
\item The coaction of $H$ is $(p\ot \mtr{id})\D$.
\item $R$ is a subalgebra of $A$.
\item The comultiplication on $R$ is given by $\D_{ _R}(r)=r_1j pS(r_2)\ot r_3\in R \ot R$.
\end{enumerate}
For all $r\in R$ one has $p(r)=\eps(r)1_H$. This can be seen applying $m\dot(p\ot S_H)$ to the identity $r_1\ot p(r_2)=r\ot 1$.

Define $\nu : A \ra R$ by $\nu(a)=a_1j pS(a_2)$. Then $\nu(ab)=a_1\nu(b)j pS(a_2)$ and $\nu(aj(h))=\nu(a)j(h)$ for all $a,b \in A$ and $h \in H$. It can be proved that $\nu$ is a coalgebra map and it induces a coalgebra isomorphism $\nu: A/Aj(H)^+\cong R$ \cite{AS3}.
Thus $\nu^*:R^* \ra A^*$ is an algebra embedding and \bn{equation}\nu^*(R^*)=\{f \in A^*\;| \;f(ai(h))=f(a)\eps(h)\;\text{for all}\; a \in A, \; h\in H\}\end{equation}
\sub{} \label{isom}The map $\phi : A \ra R\# H$ given by $a \mapsto \nu(a_1)\# p(a_2)$ is an isomorphism of algebras with the inverse given by $r\# h\mapsto rj(h)$.
\sub{Dual Hopf algebra} One has that $p^* :H^*\ra A^*$ and $j^* :A^*\ra H^*$ are Hopf algebra homomorphisms such that $j^*p^*=\mtr{id}_{H^*}$ Then
\bn{eqnarray*}
{A^*}^{\;co \;H^*}   : & \! = \! & \{f \in A^*\;|\;(\mtr{id}\ot j^*)\D(f)=f\ot \eps\} =\\ & =& \{f \in A^*\;|\; f(aj(h))=f(a)\eps(h)\;\text{for all}\; a \in A, \; h\in H\}= \\ & =& \nu^*(R^*)
\end{eqnarray*}

Thus $A^* \cong R^*\# H^*$ via $f \mapsto f_1p^*j^*(Sf_2)\# j^*(f_3)$ with the inverse given by $r^*\#f \mapsto \nu^*(r^*)p^*(f)$.
\sub{}\label{identif} Under  this isomorphism one has that
\bn{eqnarray*}(r^*\#f)(r\#h)& \! = \! &(\nu^*(r^*)p^*(f))(rj(h))=\nu^*(r^*)(r_1j(h_1))p^*(f)(r_2j(h_2))\\ & = & \nu^*(r^*)(r_1)p^*(f)(r_2j(h))=r^*(r_1)(f)(p(r_2)h)\\ & = & r^*(r)f(h)\end{eqnarray*}
\sub{} \label{duality}By duality one has that $A=A^{**}\cong R^{**}\# H^{**}=R\# H$ and it can be checked that the isomorphism obtained in such a way is just the isomorphism from \ref{isom}.

For all $r \in R$ and $h \in H$ it follows that $S^{-1}j(h_2)rj(h_1)\in R$ since \bn{eqnarray*}((\mtr{id}\ot p)\D)(S^{-1}j(h_2)rj(h_1))& \! = \! &S^{-1}j(h_4)r_1j(h_1)\ot p(S^{-1}j(h_3)r_2j(h_2))\\ & = & S^{-1}j(h_4)rj(h_1)\ot p(S^{-1}j(h_3)j(h_2)\\ & = & S^{-1}j(h_1)rj(h_2)\ot 1\end{eqnarray*}
\sub{Definition}Let $A$ be a finite dimensional Hopf algebra. An element $z \in A$ is called a $left\;integral\;of \;A$ (respectively $right\;integral\;of\; A$) if $az=\eps(a)z$ (respectively $za=\eps(a)z$) for all $a \in A$. The space of left (resp.\ right) integrals of $A$ is a one dimensional ideal $\int_A^l$ (resp.\ $\int_A^r$) of $A$ and $S(\int_A^l)=\int_A^r$ where $S$ is the antipode of $A$ (see \cite{montg}).

If $z \in \int_A^l$ is a nonzero left integral of $A$, then there is a unique grouplike element $\gamma \in G(A^*)$, called $the\; distinguished \;grouplike\; element\; of\; A^*$ such that $za=\gamma(a)z$, for all $a \in A$. If $z' \in \int_A^r $ then $az'=\gamma^{-1}z'$, for all $a \in A$.

If $\lam \in \int_{A^*}^r$ is nonzero, then there exists a unique grouplike element $g \in G(A)$ such that $f\lam=f(g)\lam$ for all $f \in A^*$. The element $g \in G(A)$ is \\ called $the \;distinguished \;grouplike\; element \;of \;A$.
\subsection{Left integrals in $A$}\label{leftint} Let $x$ be a left integral of $R$ and $\Lam$ be a left integral of $H$. Then $\Lam x= \Lam_1.x\# \Lam_2$ is a left integral of $A$. Clearly $h(\Lam x)=\eps(h)\Lam x$ and $r(\Lam x)=\Lam_1 ((S\Lam_2).r)x=\Lam_1\eps(S(\Lam_2).r)x=\eps(r)\Lam x$ for all $h \in H$ and $r\in R$.

\sub{} \label{onintegral}There is $\gamma \in G(H^*)$ such that $h.x=\gamma(h)x$ for all $h \in H$. Indeed
\bn{eqnarray*}
r(h.x)=r(j(h_1)xj(Sh_2))& \! = \! &  j(h_3)(S^{-1}j(h_2)rj(h_1))xS(h_4)\\ & = & j(h_3)\eps(S^{-1}j(h_2)rj(h_1))xS(h_4)\\ & = &\eps(r)(j(h_1)xj(Sh_2) ) \end{eqnarray*}
Thus $h.x$ is an integral in $R$ and since the space of integrals is one dimensional it follows that $h.x=\gamma(h)x$ for some $\gamma \in G(H^*)$.

\subsection{The distinguished grouplike element of $A^*$}\label{distingA*} Let $\al_{ _R}$ and $\al_{ _H}$ be the distinguished grouplike elements of $R$ and $H$. Then
$xr=\al_{ _R}(r)x$ and $\Lam h=\al_{ _H}(h)\Lam$ for all $r\in R$ and $h\in H$. Let $\al_{ _A}$ be the distinguished grouplike element of $A$. Then $\al_{ _A}$ is given by the following equation: $(\Lam x)(r\#h)=\al_{ _A}(r\# h)(\Lam x)$.

On the other hand
\bn{eqnarray*}
( \Lam x)(r\#h) & \! = \! & \al_R(r)(\Lam x h)=\al_R(r)\Lam h_2 (S^{-1}(h_1).r)\\ & = & \al_{ _R}(r)\al_{ _H}(h_2)\gamma^{-1}(h_1)=\al_{ _R}(r)(\gamma^{-1}\al_{ _H})(h)
\end{eqnarray*}
Using \ref{identif} it can be shown that $\al_{ _A}=\al_{ _R}\# \gamma^{-1}\al_H$.

\subsection{Right integral in $A^*$} \label{rightint}If $t$ is a right integral in $R^*$ and $\lam$ a right integral in $H^*$ it can be similarly checked that $t\#\lam$ is a right integral in $A^*$ ( see also \cite{MFS}).

Similarly to \ref{onintegral} it can be proved that there is a group like element $g \in H$ such that $f.t=f(g)t$ for all $f \in H^*$.
Indeed
\bn{eqnarray*}
(f_1ts(f_2))r^*  & \! = \! & f_1t(Sf_4r^*S^2f_3)Sf_2\\ & = & f_1t \eps(Sf_4r^*S^2f_3)Sf_2=\\ & = & r^*(1)f_1 t S(f_2)
\end{eqnarray*}
for all $r^*\in R^*$ and $f \in H^*$. Thus $f.t$ is an integral in $R^*$ and since the space of integrals is one dimensional it follows that $f.t=f(g)t$ for some $g \in G(H)$.
\subsection{The distinguished grouplike element of $A$}\label{distingA}
 Let $g_{ _R}$ and $g_{ _H}$ be the distinguished grouplike elements of $R^*$ and $H^*$. Thus $r^*t=r^*(g_{ _R})t$ and $h^*\lam=h^*(g_{ _H})\lam$ for all $r^* \in R^*$ and $h^* \in H^*$. It follows that the distinguished grouplike element of $A$ is $g_{ _A}=g_{ _R} \# gg_{ _H}$. Indeed,
 \bn{eqnarray*}
 (r^*\# f)(t\# \lam)= r^*(f_1.t)\#f_2\lam=f_1(g)r^*t\#f_2(g_{ _H})\lam=r^*(g_{ _R})f(gg_{ _H})
 \end{eqnarray*}
 for all $r^* \in R^*$ and $f \in H^*$. Using \ref{isom} and \ref{duality} it follows that $g_{ _A}=g_{ _R} \# gg_{ _H}$.

\subsection{The situation when $R$ is a graded braided Hopf algebra}
Suppose $R=\oplus_{i=0}^NR(i)$ is a graded braided Hopf algebra with $R(0)=k$. Then $R(N)=k$ is the space of left and right integrals in $R$ \cite{AnGr}. Thus $R$ is unimodular and $\al_{ _R}=\eps$.
If  $R=\oplus_{i=0}^NR(i)$ is a graded braided Hopf algebra in $^H_H\mtc{YD}$ with $R(0)=k$ then $R^*=\oplus_{i=0}^NR(i)^*$ is a graded braided Hopf algebra in $^{H^*}_{H^*}\mtc{YD}$. Thus $R^*$ is also unimodular and $g_R=1$. Thus in this situation $\al_{ _A}=\eps \# \gamma^{-1}\al_{ _H}$ and $g_{ _A}=1\# gg_{ _H}$
\subsection{Ribbon elements} \label{ribbon} A Hopf algebra $A$ is called quasitriangular if there is an invertible element $R=\sum x_i \ot y_i \in A\ot A$ such that $\D(a)=R\D(a)R^{-1}$ for all  $a\in A$, and $R$ satisfies the following relations $(\D\ot \mr{id})R=R_{13}R_{23}$, $(\mr{id} \ot \D)R=R_{13}R_{12}$ where $R_{12}=\sum x_i\ot y_i\ot 1$, $R_{13}=\sum x_i\ot 1 \ot  y_i$, $R_{23}=\sum 1\ot x_i\ot y_i$. Let $u=\sum S(y_i)x_i$. Then $uS(u)$ is central in $A$ and is referred to as the Casimir element.

An element $v \in A$ is called $quasi-ribbon$ element of a quasitriangular Hopf algebra $(A,\;R)$ if:
\begin{enumerate}
\item $v^2=c$
\item $S(v)=v$,
\item$\eps(v)=1$,
\item $\D(v)=R\widetilde{R}^{-1}(v\ot v)$ where $\widetilde{R}=\sum y_i \ot x_i$ if $R=\sum x_i \ot y_i$.
\end{enumerate}

If $v $ is central in $A$ then $v$ is called $ribbon\;element$ of $A$ and $(A,\;R,\;v)$ is called $ribbon\; Hopf\; algebra$. Ribbon elements are used to construct invariants of knots and links \cite{RY} \cite{RK}, \cite{RYT}.

The Drinfeld double $D(A)$ of a finite dimensional Hopf algebra $A$ is a quasitriangular Hopf algebra with $R=\sum (1\ot e_i)\ot (f_i \ot 1)$ where $e_i$ and $f_i$ are dual bases of $A$ and $A^*$. Kauffman and Radford provided the the following criterion for a Drinfeld double $D(A)$ to be a ribbon Hopf algebra.
\begin{thm}\cite{RK}\label{criterion} Assume $A$ is a finite dimensional Hopf algebra and let $g$ and $\gamma$ be the distinguished group-like elements of $A$ and $A^*$ respectively. Then:

i) $D(A)$ has a quasi-ribbon element if and only if there exist group-like elements $h \in A$, $\delta \in A^*$ such that $h^2=g$ and $\delta^2=\gamma$

ii) $(D(A), R)$ has a ribbon element if and only if there exist $h$ and $\delta$ as in i) such that $$S^2(a)=h(\delta \rh a \lh \delta^{-1})h^{-1}$$
for all $a \in A$.
\end{thm}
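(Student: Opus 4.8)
The plan is to reduce both statements to a purely quasitriangular fact and then to evaluate the resulting data on $D(A)$ using the explicit $R$-matrix and the integral computations recalled in Section~\ref{braided}.

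\textbf{Step 1 (a general criterion).} Let $(H,R)$ be any finite dimensional quasitriangular Hopf algebra and $u=\sum S(y_i)x_i$ its Drinfeld element. I would start from Drinfeld's identities: $u$ is invertible, $S^2(a)=uau^{-1}$, $S^2(u)=u$, $\eps(u)=1$, the element $uS(u)=S(u)u$ is central, and $\D(u)=(\widetilde R R)^{-1}(u\ot u)$. From these one checks that $S^4=\mathrm{Ad}_{uS(u)^{-1}}$. The claim of this step is: $H$ has a quasi-ribbon element $v$ if and only if there is a grouplike element $\ell\in G(H)$ with $\ell^2=uS(u)^{-1}$; and $H$ has a ribbon element if and only if moreover $S^2(a)=\ell a\ell^{-1}$ for all $a\in H$. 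The correspondence is $v\leftrightarrow\ell:=uv^{-1}$. Given a quasi-ribbon $v$, the identities for $\D(u)$ and $\D(v)$ together with centrality of $uS(u)$ show $\ell$ is grouplike, and $v^{2}=uS(u)$ gives $\ell^{2}=u^{2}(uS(u))^{-1}=uS(u)^{-1}$; conversely $v:=u\ell^{-1}$ satisfies $v^{2}=uS(u)$, $\eps(v)=1$, and since $S(\ell)=\ell^{-1}$ also $S(v)=v$, so $v$ is always quasi-ribbon, and it is central (hence ribbon) exactly when $\ell^{-1}a\ell=u^{-1}au=S^{-2}(a)$, i.e. $S^{2}=\mathrm{Ad}_{\ell}$.

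\textbf{Step 2 (the double).} Next I would specialize to $H=D(A)$ with $R=\sum(1\ot e_i)\ot(f_i\ot 1)$, and evaluate $uS(u)^{-1}$. Writing $u$ and $S(u)$ in terms of the dual bases and using the descriptions of the integrals of $A$ and $A^{*}$ from Section~\ref{braided}, one obtains, following Radford, that $uS(u)^{-1}$ is precisely the grouplike element of $D(A)$ corresponding to the pair $(g,\gamma)$, where $g$ and $\gamma$ are the distinguished grouplikes of $A$ and $A^{*}$; this is the grouplike implementing $S^{4}$ on $D(A)$. Now $G(D(A))$ consists of the products $h\delta$ with $h\in G(A)$, $\delta\in G(A^{*})$, and since $h\rh\delta\lh h^{-1}=\delta$ one has $h\delta=\delta h$, so squaring is componentwise: $(h\delta)^{2}=h^{2}\delta^{2}$. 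Hence the condition $\ell^{2}=uS(u)^{-1}$ becomes $h^{2}=g$ and $\delta^{2}=\gamma$, which proves~i).

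\textbf{Step 3 (the ribbon condition).} For ii) I would unwind $S^{2}=\mathrm{Ad}_{\ell}$ with $\ell=h\delta$. Since $D(A)$ is generated by $A$ and $A^{*}$ and both $S^{2}$ and $\mathrm{Ad}_{\ell}$ are algebra automorphisms, the equation need only be checked on $A$ (the condition on $A^{*}$ being equivalent). On $a\in A\subset D(A)$ one has $\mathrm{Ad}_{\delta}(a)=\delta\rh a\lh\delta^{-1}$ and then $\mathrm{Ad}_{h}$ is ordinary conjugation, so $\mathrm{Ad}_{\ell}(a)=h(\delta\rh a\lh\delta^{-1})h^{-1}$; thus the ribbon condition is exactly $S^{2}(a)=h(\delta\rh a\lh\delta^{-1})h^{-1}$ for all $a\in A$. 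Combining Steps 1--3 gives both parts.

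The step I expect to be the main obstacle is Step~2: identifying $uS(u)^{-1}$ with the grouplike built from $g$ and $\gamma$ requires a careful computation with the Drinfeld element of the double and consistent conventions for the evaluation pairing, the $R$-matrix, and the identification $G(D(A))\cong G(A)\times G(A^{*})$; once that is in place the remaining arguments are formal.
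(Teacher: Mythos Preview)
The paper does not prove this theorem; it is quoted verbatim from \cite{RK} (Kauffman--Radford) and used as a black box in Section~\ref{Qdwr}. There is therefore no proof in the paper to compare your attempt against. Your sketch is essentially the original argument of \cite{RK}, and the overall architecture (Step~1: quasi-ribbon $\leftrightarrow$ grouplike square root of $uS(u)^{-1}$; Step~2: compute this element in $D(A)$; Step~3: unwind conjugation by $\ell=\delta h$ on $A$) is correct.

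Two remarks on Step~2, which you rightly flag as the crux. First, the identification of $uS(u)^{-1}\in D(A)$ with the grouplike built from $(g,\gamma)$ in \cite{RK} is not organized as a computation with integrals; rather, one combines Drinfeld's general fact that $uS(u)^{-1}$ is a grouplike implementing $S^4$ with Radford's formula $S^4(a)=g(\gamma\rh a\lh\gamma^{-1})g^{-1}$, and then checks that in $D(A)$ the grouplike $\gamma\ot g$ (with the appropriate sign/inverse conventions) does the job and is the only one that can. Integrals enter only indirectly, through the proof of Radford's $S^4$ formula. Second, your claim that $G(D(A))=G(A^*)\times G(A)$ is correct because $D(A)=A^{*\,\mathrm{cop}}\ot A$ as a coalgebra, but it deserves a one-line justification rather than being asserted; likewise the commutation $h\delta=\delta h$ follows from the straightening relation $h\delta=(h\rh\delta\lh h^{-1})h=\delta(h)\delta(h^{-1})\delta\,h$, which you should spell out.
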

\subsection{Condition for $D(A)$ to be ribbon}
According to \ref{criterion} $D(A)$ is a ribbon algebra if and only if there are grouplike elements $\delta_{ _A}$ and $h_{ _A}$ in $A^*$ and $A$, respectively such that $\delta_{ _A}^2=\eps_{ _R} \# \gamma^{-1}\al_{ _H}$ and $h_{ _A}^2=1\# gg_{ _H}$ and
\bn{equation}\label{condit}S^2(a)=h_{_ A}(\delta_{ _A}\rightharpoonup a\leftharpoonup {\delta_{ _A}}^{-1}){h_{_ A}}^{-1}\end{equation} for all $a \in A$.

It is enough to check this conditions on the algebra generators of $A$ namely, $r\in R$ and $j(h)$ with $h \in H$.

\sub{} \label{partsit} Suppose that  there are grouplike elements $\delta \in G(H^*)$ and $h\in G(H)$, respectively such that $\delta^2= \al_{ _H}\gamma^{-1}$ and $h^2=gg_{ _H}$.  Consider $\delta_{ _A}=\eps \# \delta$ and $h_{ _A}=1\# h$. These are grouplike elements of $A^*$  and $A$, respectively and $\delta_{ _A}^2=\eps_{ _R} \# \al_{ _H}\gamma^{-1}$ and $h_{ _A}^2=1\# gg_{ _H}$.

For $a=j(h)$ the condition \ref{condit} becomes

\bn{equation}\label{on H}
S^2(h)=h_{_ H}(\delta\rightharpoonup h\leftharpoonup {\delta}^{-1}){h_{_ H}}^{-1}
\end{equation}

For $a=r$ one has that
 \bn{eqnarray*}
\D_{ _{R \# H}}(r \# 1)& \! = \! & \phi(r_1)\ot\phi(r_2)\\ & = & (\nu(r_1)\# p(r_2))\ot (\nu(r_3)\# p(r_4))\\ & = & (\nu(r_1)\# p(r_2))\ot (\nu(r_3)\# 1)
 \end{eqnarray*}
since $r_1\ot r_2\ot r_3\ot p(r_4)=r_1\ot  r_2 \ot r_3 \ot 1$ for all $r \in R$.

Thus
\bn{eqnarray*}(\eps\# \delta) \rightharpoonup (r\#1) & \! = \! &  (\nu(r_1)\# p(r_2)) <\eps \# \delta, \;\nu(r_3)\# 1>\\ & = &\nu(r_1)\# p(r_2)=\nu(r)\#1\\ & = & r\#1\end{eqnarray*}
Similarly
\bn{eqnarray*}
(r\#1)\leftharpoonup (\eps \# { \delta}^{-1})& \! = \! &(\nu(r_3)\# 1)<\eps \# { \delta}^{-1},\;\nu(r_1)\# p(r_2)>\\ & = &(\nu(r_2)\# 1) {\delta}^{-1}(p(r_1))
\end{eqnarray*}

Thus the condition \ref{condit} for $a=r$ is
\bn{equation}\label{on R}
S^{2}(r)=(h. \nu(r_2))\delta^{-1}(p(r_1))
\end{equation}
for all $r \in R$.

\section{Quantum doubles which are ribbon}\label{Qdwr}

Using the results from the previous section,  we determine the left and right integrals of $A \cong u(\mc{D},\;0,\;0)$ and its distinguished grouplike element. By duality, the integrals of $A^*$ and its distinguished grouplike element are also described. The condition obtained in the previous section for $D(A)$ to be a ribbon algebra will be verified for $A=u(\mc{D},\;0,\;0)$

Consider $H=kG$ for an abelian group $G$ and $V$ be a finite dimensional Yetter-Drinfeld module over the group algebra $kG$. Then $V$ has a basis $(x_i)_{1\leq i \leq \teta}$ with $x_i \in V_{g_i}^{\chi_i}$, where $V_{g_i}^{\chi_i}:=\{gv=\chi_i(g)v,\;\delta(v)=g_i\ot v\}$ and $\delta$ is the comodule structure of $V$.

Suppose that $V\in ^H_H\mtc{YD}$ of finite Cartan type, which means $\chi_i(g_i) \neq 1$ for all $1\leq i\leq \theta$ and there is a Cartan matrix of finite type $(a_{ij})_{1\leq i,j \leq \theta}$ such that $$\chi_j(g_i)\chi_i(g_j)=\chi_i(g_i)^{a_{ij}}$$ for all $1\leq i,j \leq \theta$.

Let $R=B(V)$ the Nichols algebra of a finite dimensional braided vector space $V\in ^H_H\mtc{YD}$ of finite Cartan type. Then $A=u(\mc{D},\;0,\;0)=B(V)\# kG$ and from Corollary \ref{b*} it follows that $A^*=u(\mc{\tilde{D}},\;0,\;0)$.

\begin{prop}\label{int}
Let $\Lam_G=\frac{1}{|G|}\sum_{g\in G}g$ be the integral of $kG$ and $x=\prod_{i=1}^py_i^{N_i-1}$. Then $t_l=\Lam_Gx$ is a left integral of $A$ and $t_r=x\Lam_G$ is a right integral of $A$.
\end{prop}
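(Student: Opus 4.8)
The plan is to verify the defining identity of a left integral, $a\,t_l=\eps(a)t_l$, on a set of algebra generators of $A$, and then to run the mirror argument for $t_r$. Since $\{a\in A\mid a\,t_l=\eps(a)t_l\}$ is a unital subalgebra of $A$ (if $a,b$ lie in it then $(ab)t_l=a(\eps(b)t_l)=\eps(ab)t_l$), and $A=u(\mc{D},0,0)$ is generated as an algebra by $G$ together with $x_1,\dots,x_{\teta}$, it suffices to treat $a=g\in G$ and $a=x_i$. For $a=g$ the identity is immediate: $g\,t_l=(g\Lam_G)x=\Lam_G x=t_l=\eps(g)t_l$, because $\Lam_G$ is a two-sided integral of $kG$.

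For $a=x_i$ I would use that $x_i$ moves past $\Lam_G$ up to a character: from $gx_ig^{-1}=\chi_i(g)x_i$ one gets $x_ig=\chi_i(g)^{-1}gx_i$, hence $x_i\Lam_G=\eta_i\,x_i$ with $\eta_i:=\tfrac1{|G|}\sum_{g\in G}\chi_i(g)^{-1}g\in kG$, so that $x_i\,t_l=(x_i\Lam_G)x=\eta_i\,(x_ix)$. Everything then reduces to the vanishing $x_ix=0$ in $B(V)$. This follows from the grading: since $\lam=\mu=0$, the algebra $A$, and hence $B(V)$, is $\mathbb{Z}^{\teta}$-graded, each $x_i$ being homogeneous of total degree $1$; and the PBW basis shows that $y_1^{u_1}\cdots y_p^{u_p}$ has maximal total degree precisely when $u_j=N_{\beta_j}-1$ for all $j$, i.e.\ $x=\prod_{j=1}^p y_j^{N_{\beta_j}-1}$ is, up to scalar, the unique element of top degree in $B(V)$. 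Therefore $x_ix$ is homogeneous of strictly larger total degree and must vanish, so $x_i\,t_l=0=\eps(x_i)t_l$. With the subalgebra remark this gives $a\,t_l=\eps(a)t_l$ for all $a\in A$; and $t_l\neq 0$, since writing $x=y_{\ul{u}_{\,0}}$ with $\ul{u}_{\,0}=(N_{\beta_1}-1,\dots,N_{\beta_p}-1)$ and using $gy_{\ul{u}}=\chi_{\ul{u}}(g)y_{\ul{u}}g$ one obtains $t_l=\tfrac1{|G|}\sum_{g}\chi_{\ul{u}_{\,0}}(g)\,y_{\ul{u}_{\,0}}g$, a nonzero linear combination of PBW basis elements.

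The case $t_r=x\Lam_G$ is the mirror image and needs no new idea: $\{a\in A\mid t_r a=\eps(a)t_r\}$ is again a unital subalgebra, $t_r g=x(\Lam_G g)=t_r$ for $g\in G$, while $\Lam_G x_i=x_i\,\eta_i'$ with $\eta_i':=\tfrac1{|G|}\sum_g\chi_i(g)g$ turns $t_r x_i$ into $(xx_i)\,\eta_i'=0$ by the same degree count; hence $t_r$ is a nonzero right integral. The one genuinely substantive point is this vanishing $x_ix=xx_i=0$, i.e.\ the identification of $x$ with the top-degree component of the $\mathbb{Z}^{\teta}$-graded algebra $B(V)$, which is exactly where the nilpotent-type hypotheses $\lam=\mu=0$ enter; the rest is just the formal interaction of $\Lam_G$ with the relations $gx_ig^{-1}=\chi_i(g)x_i$. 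As a consistency check, the statement also follows from Subsection \ref{leftint}: $x$ spans the space of (left and right) integrals of the graded braided Hopf algebra $R=B(V)$ by \cite{AnGr}, $\Lam_G$ is an integral of $H=kG$, and under the algebra isomorphism $A\cong R\#H$ of \ref{isom} the element $(\Lam_G)_1.x\#(\Lam_G)_2$ corresponds precisely to the product $\Lam_G x$ in $A$.
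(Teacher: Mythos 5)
Your proof is correct, but it takes a genuinely different route from the paper's. The paper disposes of the proposition in two lines by invoking the general biproduct machinery of Section \ref{braided}: since $R=B(V)$ is a graded braided Hopf algebra with one-dimensional top component, the result of \cite{AnGr} cited there identifies the top-degree PBW monomial $x$ with the (two-sided) integral of $R$, and then \ref{leftint} and \ref{rightint} assert precisely that $\Lam_G x$ and $x\Lam_G$ are a left and a right integral of $A\cong R\#kG$. You instead verify the defining identity $a\,t_l=\eps(a)t_l$ directly on the algebra generators $g\in G$ and $x_i$, reducing everything to the vanishing $x_ix=xx_i=0$, which you extract from the $\mathbb{Z}^{\teta}$-grading of $u(\mc{D},0,0)$ and the PBW basis (the graded component of degree $\mathrm{dim}(x)+\ul{e}_i$ is zero because its height exceeds the maximal height $\sum_j(N_{\beta_j}-1)\mathrm{ht}(\beta_j)$ occurring in the basis) --- and this is exactly where the hypothesis $\lam=\mu=0$ enters, since only then is multiplication degree-preserving. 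Your argument is self-contained: it needs neither the \cite{AnGr} identification of integrals of graded braided Hopf algebras nor the smash-product computation $r(\Lam x)=\Lam_1((S\Lam_2).r)x$ of \ref{leftint}, and it has the small bonus of checking $t_l\neq 0$ explicitly. What it costs is that it is tied to the generators-and-relations presentation of this particular $A$, whereas the paper's argument is uniform in $R$ and $H$ and is reused immediately afterwards for $A^*$ and for the distinguished grouplike elements. Your closing ``consistency check'' is, in fact, exactly the paper's proof.
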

\begin{proof} Since $x$ is a homogeneous element with maximal degree in $B(V)$, it follows from \cite{AnGr} that $x$ is a left and right integral of $R$. Then \ref{leftint} implies that $\Lam_Gx$ is a left integral in $A$ and \ref{rightint} implies that $x\lam_G$ is a right integral in $A$.
\end{proof}

\begin{prop} \label{disting}The element $\gamma \in G(A^*)$ defined by $\gamma(g)=\prod_{i=1}^p\chi_{\beta_i}^{-(N_i-1)}$ and $\gamma(x_i)=0$ is the distinguished grouplike element of $A^*$.
\end{prop}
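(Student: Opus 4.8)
The plan is to read $\gamma$ off a nonzero left integral of $A$, using the characterisation recalled in Section~\ref{braided}: if $t_l$ is a nonzero left integral of $A$, then the distinguished grouplike element of $A^*$ is the unique $\gamma\in G(A^*)$ with $t_la=\gamma(a)t_l$ for all $a\in A$. By Proposition~\ref{int} we may take $t_l=\Lam_Gx$, where $\Lam_G=\tfrac1{|G|}\sum_{g\in G}g$ and $x=\prod_{i=1}^py_i^{N_i-1}=y_{_{\ul u_0}}$ with $\ul u_0=(N_1-1,\dots,N_p-1)$. First I would check $t_l\neq 0$: from $g\,y_{_{\ul u_0}}=\chi_{_{\ul u_0}}(g)\,y_{_{\ul u_0}}g$ one gets $\Lam_Gx=y_{_{\ul u_0}}\cdot\tfrac1{|G|}\sum_{g\in G}\chi_{_{\ul u_0}}(g)\,g\neq 0$ in the PBW basis of $A$. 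Since $\gamma$ is grouplike in $A^*$ it is an algebra map $A\to k$, hence determined by its values on the algebra generators $g\in G$ and $x_1,\dots,x_\teta$; so it is enough to compute $t_lg$ and $t_lx_j$.

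For $g\in G$, the relation $g\,y_{_{\ul u}}g^{-1}=\chi_{_{\ul u}}(g)\,y_{_{\ul u}}$ gives $xg=\chi_{_{\ul u_0}}(g)^{-1}gx$, and together with $\Lam_Gg=\Lam_G$ this yields $t_lg=\Lam_Gxg=\chi_{_{\ul u_0}}(g)^{-1}\Lam_Gx=\chi_{_{\ul u_0}}(g)^{-1}t_l$. Hence $\gamma(g)=\chi_{_{\ul u_0}}(g)^{-1}=\prod_{i=1}^p\chi_{\beta_i}^{-(N_i-1)}(g)$, the stated value. For a generator $x_j$, the element $x=y_{_{\ul u_0}}$ spans the top graded component of $R=B(V)$ (this is exactly the fact from \cite{AnGr} invoked in the proof of Proposition~\ref{int}), so the product $xx_j$, computed inside the subalgebra $B(V)\subset A$, lies in a strictly larger degree and therefore vanishes; thus $t_lx_j=\Lam_G(xx_j)=0=\gamma(x_j)t_l$, and since $t_l\neq 0$ this forces $\gamma(x_j)=0$. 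Comparing with the stated formula and invoking uniqueness of the distinguished grouplike element finishes the proof.

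The one step needing a moment's care is the vanishing $xx_j=0$: one must take the product inside $B(V)$ — legitimate because $B(V)$ embeds in $A=B(V)\#kG$ as a subalgebra — and use that $x$ genuinely occupies the top degree, which is precisely the property of the integral of a graded connected braided Hopf algebra already used in Proposition~\ref{int}. Apart from that, everything reduces to the commutation rule $g\,y_{_{\ul u}}g^{-1}=\chi_{_{\ul u}}(g)\,y_{_{\ul u}}$ and the invariance $\Lam_Gg=\Lam_G$, so I anticipate no real obstacle.
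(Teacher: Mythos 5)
Your proof is correct, and it takes a more direct route than the paper. The paper deduces the result from the general bosonization formula of \ref{distingA*}: for $A=R\#H$ the distinguished grouplike of $A^*$ is $\al_{_R}\#\gamma^{-1}\al_{_H}$, where $\gamma\in G(H^*)$ is read off the adjoint action of $H$ on the one-dimensional space of integrals of $R$; combined with unimodularity of $R$ (graded, by \cite{AnGr}) and of $kG$, this gives $\eps\#\prod_i\chi_{_{\beta_i}}^{-(N_i-1)}$. You instead verify the defining identity $t_la=\gamma(a)t_l$ directly on the algebra generators, using the explicit integral $t_l=\Lam_Gx$ of Proposition \ref{int}; since a grouplike of $A^*$ is an algebra map $A\to k$, checking on $g\in G$ and on the $x_j$ suffices. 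The two arguments ultimately rest on the same inputs --- the commutation $gy_{_{\ul u}}g^{-1}=\chi_{_{\ul u}}(g)y_{_{\ul u}}$ and the fact that $x$ spans the top graded component of $B(V)$ (which gives both $xx_j=0$ and, in the paper's version, unimodularity of $R$) --- but yours is self-contained modulo Proposition \ref{int} and avoids the smash-product identifications of Section \ref{braided}. A small bonus of your approach is that it produces $\gamma(x_j)=0$ by an explicit degree argument rather than having it built into the form $\eps_{_R}\#(\cdot)$, and it sidesteps the sign bookkeeping between the character $\gamma$ of \ref{onintegral} and its inverse appearing in $\al_{_A}$, which is the one genuinely delicate point in the paper's shorter argument. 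All steps check out: the nonvanishing of $t_l$ via the PBW basis, the computation $t_lg=\chi_{_{\ul u_0}}(g)^{-1}t_l$, and the legitimacy of computing $xx_j$ inside the subalgebra $B(V)\subset B(V)\#kG$.
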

\begin{proof} Using \ref{distingA*} the element $\al_{ _A}=\al_{ _R}\# \gamma^{-1}\al_{ _H}$ is distinguished grouplike element of $A^*$.
In the situation $R=B(V)$ and $H=kG$ one has that $\al_{ _R}=\eps_{ _R}$ and $\al_{ _H}=\eps_{ _H}$. On the other hand $\gamma$ is given by the equation $g.x=\gamma(g)x$ for all $g \in G$.
Since $g.y_i=\chi_{_{\beta_i}}(g)y_i$ it follows that $g.x=\prod_{i=1}^p\chi_{_{\beta_i}}(g)^{-(N_i-1)}x$ and thus $\gamma=\prod_{i=1}^p\chi_{_{\beta_i}}^{(N_i-1)}$.
\end{proof}
\begin{prop}\label{int*}
Let $\Lam_{G^*}=\frac{1}{|G|}\sum_{\vartheta \in G}\vartheta$ be the integral of $kG^*$ and $Y=\prod_{i=1}^pY_i^{N_i-1}$. Then $T_l=\Lam_{G^*}Y$ is a left integral of $A^*$ and $T_r=Y\Lam_{G^*}$ is a right integral of $A^*$. Moreover the element $g=\prod_{i=1}^pg_{_{\beta_i}}^{(N_i-1)}$ is the distinguished group like element of $A$.
\end{prop}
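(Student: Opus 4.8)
The plan is to obtain this as the dual of Propositions \ref{int} and \ref{disting}. By Corollary \ref{b*} the dual Hopf algebra is $A^* = u(\mc{\widetilde{D}},\;0,\;0) = B(W)\#k\widehat G$, a bosonization of exactly the same shape as $A = B(V)\#kG$, so the integrals of $A^*$ and the distinguished grouplike element of $A$ will be read off by running the arguments of Section \ref{braided} with the roles of $A$ and $A^*$ — equivalently of $G$ and $\widehat G$, and of $B(V)$ and $B(W)$ — interchanged.

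First I would check that $Y = \prod_{i=1}^p Y_i^{N_i-1}$ is a two-sided integral of the braided Hopf algebra $R^* := B(W)$: it is a homogeneous element of maximal $\mathbb{Z}^\theta$-degree in the finite-dimensional graded braided Hopf algebra $B(W)$, whose degree-zero component is $k$, so by \cite{AnGr} it spans $\int^l_{R^*} = \int^r_{R^*}$. Applying \ref{leftint} with $R = B(W)$, $H = k\widehat G$, $x = Y$ and $\Lam = \Lam_{G^*}$ then gives that $T_l = \Lam_{G^*}Y$ is a left integral of $A^*$, and applying \ref{rightint} with $t = Y$ and $\lam = \Lam_{G^*}$ gives that $T_r = Y\Lam_{G^*}$ is a right integral of $A^*$; here I use that in the smash product $(1\#h)(r\#1) = (h_1.r)\#h_2$ and $(r\#1)(1\#h) = r\#h$, so the products $\Lam_{G^*}Y$ and $Y\Lam_{G^*}$ formed inside $A^*$ are precisely the elements constructed in \ref{leftint} and \ref{rightint}.

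For the distinguished grouplike element of $A$ I would invoke \ref{distingA}: $g_{ _A} = g_{ _R}\#g\,g_{ _H}$, where $g_{ _R}$, $g_{ _H}$ are the distinguished grouplike elements of $R^*$ and $H^*$ and $g\in G(H)$ is characterized by $f.t = f(g)t$ for all $f\in H^*$ and $t\in\int^r_{R^*}$. Since $R = B(V)$ is a graded braided Hopf algebra with $R(0) = k$, so is $R^* = B(W)$, hence $g_{ _R} = 1$ by the graded case of Section \ref{braided}; and $H = kG$ and $H^* = k\widehat G$ are semisimple group algebras, hence unimodular, so $g_{ _H} = 1$. It remains to identify $g$: taking $t = Y$ and $f = \chi\in\widehat G$, the adjoint action of $\chi$ scales each PBW generator $Y_i$ of $B(W)$ by $\chi(g_{\beta_i})$ — the dual of the relation $g y_i g^{-1} = \chi_{\beta_i}(g) y_i$ used in Proposition \ref{disting}, which itself follows by iterating $\chi\xi_i\chi^{-1} = \chi(g_i)\xi_i$ of Proposition \ref{part*} through the braided-commutator expression for $Y_i$ — so $\chi.Y = \chi\bigl(\prod_{i=1}^p g_{\beta_i}^{N_i-1}\bigr)Y$, whence $g = \prod_{i=1}^p g_{\beta_i}^{N_i-1}$ and $g_{ _A} = 1\#g = \prod_{i=1}^p g_{\beta_i}^{N_i-1}$.

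As a consistency check one may instead verify the defining property of the distinguished grouplike element directly on the right integral $T_r = Y\Lam_{G^*}$, namely $f T_r = f(g)T_r$ for all $f\in A^*$, testing it on the algebra generators $\chi\in\widehat G$ and $\xi_i$: for $f = \chi$ one gets $\chi Y\Lam_{G^*} = \chi\bigl(\prod_{i=1}^p g_{\beta_i}^{N_i-1}\bigr)Y\chi\Lam_{G^*} = \chi(g)T_r$ using $\chi\xi_i = \chi(g_i)\xi_i\chi$ and $\chi\Lam_{G^*} = \Lam_{G^*}$, and for $f = \xi_i$ one gets $\xi_i Y = 0$ by maximality of the degree of $Y$ while $\xi_i(g) = 0$ since $g\in G$. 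I expect the only genuinely delicate point to be the bookkeeping of how a single character $\chi\in\widehat G$ acts on the iterated-commutator root vectors $Y_i$; everything else is a transcription of the proofs of Propositions \ref{int} and \ref{disting} with $A$ and $A^*$ exchanged.
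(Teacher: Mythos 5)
Your proposal is correct and follows essentially the same route as the paper: the paper's proof simply says that by Corollary \ref{b*} one has $A^*=u(\mc{\widetilde{D}},\;0,\;0)$ and then applies Propositions \ref{int} and \ref{disting} to this dual datum, which is exactly your dualization argument with the details of \ref{leftint}, \ref{rightint} and \ref{distingA} written out. The extra consistency check on $T_r$ is not in the paper but is harmless.
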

\begin{proof} Using Corollary \ref{b*} one has that $A^*=u(\mc{\widetilde{D}},\;0,\;0)$ where $\mc{\widetilde{D}}$ was defined in Section \ref{dual}. Then the Propositions \ref{int}, \ref{disting} applied to $u(\mc{\widetilde{D}},\;0,\;0)$ give the integrals and the distinguished grouplike element of $A^*$.
\end{proof}

\begin{thm} Let $\mtc{D}=(G,\;(g_i)_{1 \leq i \leq \theta},\;(\chi_i)_{1 \leq i \leq \theta},\;(a_{ij})_{1 \leq i,j \leq \theta})$ be a datum of Cartan type and $A = u(\mc{D},\;0,\;0)$ the pointed Hopf algebra associated to it. Assume that the order $N_i$ of $\chi_i(g_i)$ is odd for all $i$ and is prime to $3$ for all i in a connected component of type $G_2$. Then $D(A)$ is a ribbon Hopf algebra.
\end{thm}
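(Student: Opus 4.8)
The plan is to apply the Kauffman--Radford criterion, Theorem \ref{criterion}(ii): since $D(A)$ is quasitriangular with its canonical $R$-matrix, it is enough to produce grouplike elements $h\in G(A)$, $\delta\in G(A^*)$ with $h^2=g$, $\delta^2=\gamma$ (where $g$, $\gamma$ are the distinguished grouplikes of $A$, $A^*$), and then to check the identity \ref{condit}, namely $S^2(a)=h(\delta\rh a\lh\delta^{-1})h^{-1}$ for all $a\in A$.

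By Proposition \ref{int*} the distinguished grouplike of $A$ is $g=\prod_{i=1}^p g_{\beta_i}^{\,N_{\beta_i}-1}$, and by Proposition \ref{disting} (via the identification $A^*\cong u(\mc{\widetilde{D}},0,0)$ of Corollary \ref{b*}) that of $A^*$ is $\gamma=\prod_{i=1}^p\chi_{\beta_i}^{-(N_{\beta_i}-1)}$, extended by zero on the $x_j$'s. Because the order $N_i$ of $\chi_i(g_i)$ is odd and is constant along each connected component, every $N_{\beta_i}$ is odd, so $(N_{\beta_i}-1)/2\in\mathbb{N}$ and one may set
\[
h=\prod_{i=1}^p g_{\beta_i}^{\,(N_{\beta_i}-1)/2}\in G\subseteq A,\qquad
\delta=\prod_{i=1}^p\chi_{\beta_i}^{-(N_{\beta_i}-1)/2}\in\widehat{G}=G(A^*);
\]
then clearly $h^2=g$ and $\delta^2=\gamma$. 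As in the Taft case of \cite{RK}, the oddness of the $N_i$ is exactly what makes these square roots available.

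To verify \ref{condit} I would use the reduction of \ref{partsit}: writing $A=R\#H$ with $R=B(V)$, $H=kG$, the elements $1\#h$ and $\eps_R\#\delta$ are of the required block form, so it suffices to check \ref{on H} on group elements --- which is immediate, since $S^2$ is the identity on $kG$, $G$ is abelian and $\delta$ is grouplike --- and \ref{on R} on the algebra generators $x_m$ of $R$. For $r=x_m$ one has $\nu(x_m)=x_m$, $\D(x_m)=x_m\ot 1+g_m\ot x_m$, $p(x_m)=0$, $p(g_m)=g_m$, and $h.x_m=\chi_m(h)x_m$ for the adjoint action of $h$, so the right side of \ref{on R} equals $\chi_m(h)\delta(g_m)^{-1}x_m$, while the left side is $S^2(x_m)=g_m^{-1}x_mg_m=\chi_m(g_m)^{-1}x_m$. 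Hence everything reduces to the scalar identities
\[
\delta(g_m)=\chi_m(g_m)\,\chi_m(h),\qquad 1\le m\le\teta .
\]

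The main obstacle is establishing these identities, and I expect them to follow from the Cartan datum together with a standard property of the Weyl vector. Writing $\beta_i=\sum_k n_k^{(i)}\al_k$ and using the Cartan condition in the form $\chi_k(g_m)\chi_m(g_k)=\chi_m(g_m)^{a_{mk}}$ gives $\chi_{\beta_i}(g_m)\chi_m(g_{\beta_i})=\chi_m(g_m)^{\langle\beta_i,\al_m^\vee\rangle}$, so
\[
\delta(g_m)\,\chi_m(h)^{-1}=\prod_{i=1}^p\bigl(\chi_{\beta_i}(g_m)\chi_m(g_{\beta_i})\bigr)^{-(N_{\beta_i}-1)/2}
=\chi_m(g_m)^{-\sum_i(N_{\beta_i}-1)\langle\beta_i,\al_m^\vee\rangle/2}.
\]
A root $\beta_i$ outside the component $J$ of $\al_m$ contributes $\langle\beta_i,\al_m^\vee\rangle=0$, and for $\beta_i\in J$ one has $N_{\beta_i}=N_m$; since $\sum_{\beta\in\Phi^+}\langle\beta,\al_m^\vee\rangle=\langle 2\rho,\al_m^\vee\rangle=2$ (as $\langle\rho,\al_m^\vee\rangle=1$ for a simple root), the exponent equals $-(N_m-1)$, and because $\chi_m(g_m)$ has order $N_m$ we get $\delta(g_m)\chi_m(h)^{-1}=\chi_m(g_m)^{1-N_m}=\chi_m(g_m)$. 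This yields the required identity, hence \ref{condit}, so by Theorem \ref{criterion} $D(A)$ is a ribbon Hopf algebra.
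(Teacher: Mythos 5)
Your proposal is correct and follows essentially the same route as the paper: the same choice of square roots $h=\prod g_{\beta_i}^{(N_{\beta_i}-1)/2}$, $\delta=\prod\chi_{\beta_i}^{-(N_{\beta_i}-1)/2}$, the same reduction of the Kauffman--Radford condition to the scalar identity $\delta^{-1}(g_m)\chi_m(h)=\chi_m(g_m)^{-1}$ on the generators $x_m$ (with the $kG$ part trivial by cocommutativity), and the same Weyl-vector computation. The only cosmetic difference is that you invoke $\langle\rho_J,\al_m^{\vee}\rangle=1$ directly, whereas the paper derives $\sum_{\beta\in\Phi_J^{+}}\langle\beta,\al_m^{\vee}\rangle=2$ from the reflection formula $s_m(\rho_J)=\rho_J-\al_m$ --- the same fact.
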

\begin{proof}
One has to verify relations \ref{on H} and \ref{on R} from \ref{partsit}.
Using the above notations it follows that $x=\prod_{\al \in \Phi^+}x_{ _\al}^{N_{ _\al}-1}=\prod_{i=1}^py_i^{N_i-1}$ is a left integral in $R=B(V)$. On the other hand from Proposition \ref{disting}\\ $g.x=(\prod_{\al \in \Phi^+}\ch_{ _\al}^{N_{ _\al}-1})(g)$ which means that $$\gamma=\prod_{\al \in \Phi^+}\ch_{ _\al}^{N_{ _\al}-1}.$$

Similarly, Proposition \ref{int*} implies that $t=\prod_{\al \in \Phi^+}Y_{ _\al}^{N_{ _\al}-1}=\prod_{i=1}^pY_i^{N_i-1}$ is a right integral in $R^*$. One has that $\ch . Y=\ch(\prod_{\al \in \Phi^+}g_{ _\al}^{N_{ _\al}-1})Y$ for all $\ch \in kG^*$, which shows that $$g=\prod_{\al \in \Phi^+}g_{ _\al}^{N_{ _\al}-1}.$$
Since $N_{ _\al}$ is odd consider in \ref{partsit}  $\delta=\prod_{\al \in \Phi^+}\ch_{ _\al}^{-\frac{(N_{ _\al}-1)}{2}}$ and $h=\prod_{\al \in \Phi^+}g_{ _\al}^{\frac{(N_{ _\al}-1)}{2}}$.

Condition \ref{on H} is automatically satisfied since $H=kG$ is cocommutative.
Indeed, if $g \in G$ then $S^2(g)=g$ and $h(\delta \rh g \lh \delta^{-1})h^{-1}=\delta(g)\delta^{-1}(g)hgh^{-1}=g$

On the the other hand condition \ref{on R} has to be checked only on a set of algebra generators of $R$, for example $x_i$ with ${1\leq i \leq \teta}$. Since $\D(x_i)=x_i \ot 1+ g_i \ot x_i$ this condition can be written as $h.\nu(x_i)\delta^{-1}(g_i)=S^2(x_i)$. Since $S^2(x_i)= \chi_i(g_i)^{-1}x_i$ and $\nu(x_i)=x_i$ this condition becomes $\ch_i(h)\delta^{-1}(g_i)=\chi_i(g_i)^{-1}$.

For $1 \leq i \leq \teta$, let $s_i$, given by $s_i(\alpha_j)=\alpha_j-a_{ij}\alpha_i$, be the the reflection corresponding to the simple root $\alpha_i$. If $\beta=\sum_{s=1}^{\theta}c_s\alpha_s$ is a root then $$s_i(\beta)=\sum_{s=1}^{\theta}c_ss_i(\alpha_s)=\sum_{s=1}^{\theta}c_s(\alpha_s-a_{is}\alpha_i)=\beta-(\sum_{s=1}^{\theta}c_sa_{is})\alpha_i$$ and therefore \begin{equation}\label{ref}(\sum_{s=1}^{\theta}c_sa_{is})\alpha_i=\beta-s_i(\beta)\end{equation}

Then $$\delta^{-1}(g_i)\chi_i(h)=\prod_{j=1}^p\chi_{_{\beta_j}}^{\frac{(N_j-1)}{2}}(g_i)\chi_i(\prod_{j=1}^pg_{_{\beta_j}}^{\frac{(N_j-1)}{2}})=
\prod_{j=1}^p(\chi_{_{\beta_j}}(g_i)\chi_i(g_{_{\beta_j}}))^{\frac{N_j-1}{2}}$$

Suppose $\beta_j=\sum_{s=1}^{\theta}c_{js}\alpha_s$ with $c_{js}\in \mathbb{Z}_{\geq 0}$, for all $1\ \leq j\leq p$. Then $$\chi_{_{\beta_j}}(g_i)\chi_i(g_{_{\beta_j}})=\prod_{s=1}^{\theta}\chi_s^{c_{js}}(g_i)\chi_i(g_s)^{c_{js}}=\prod_{s=1}^{\teta}(\chi_i(g_i))^{\sum_{s=1}^{\theta}a_{is}c_{js}}$$
 Suppose that $\alpha_i \in J$, the connected component of the Dynkin diagram that contains $\alpha_i$. Without loss of generality one may suppose that $\alpha_{1},\;\cdots,\;\alpha_{\teta_1}$ are the simple roots of $J$ and $\{\beta_{1},\;\cdots,\; \beta_{p_1}\}$ are the corresponding positive roots. It follows that $\chi_{_{\beta_m}}(g_i)\chi_i(g_{_{\beta_m}})=1$ if $m \notin \{1,\;\cdots,\;p_1\}$ since $a_{im}=0$.

Thus $\delta^{-1}(g_i)\chi_i(h)=\prod_{j=1}^{p_1}{\chi_i(g_i)^{
(\sum_{s=1}^{\teta_1}
a_{is}c_{js})\frac{N_i-1}{2}}}$. Since $\chi_i(g_i)^{N_i}=1$ one has that $$\delta^{-1}(g_i)\chi_i(h)=\prod_{j=1}^{p_1}\chi_i(g_i)^{
-\frac{
\sum_{s=1}^{\teta_1}a_{is}c_{js}
}{2}}=\chi_i(g_i)^{
-\frac{\sum_{j=1}^{p_1}
\sum_{s=1}^{\teta_1}a_{is}c_{js}
}{2}}$$

Thus, in order to show that $D(A)$ has a ribbon element one has to check that $\sum_{j=1}^{p_1}
\sum_{s=1}^{\teta_1}a_{is}c_{js}=2$.

Let $\rho_J=\sum_{j=1}^{p_1}\beta_{j}/2$ half sum of the positive roots of the connected component $J$. Using equation \ref{ref} one has $(\sum_{s=1}^{\teta_1}a_{is}c_{js})\alpha_i=\beta_j-s_i(\beta_j)$ for any $1 \leq j \leq p_1$. Therefore $$(\sum_{j=1}^{p_1}
\sum_{s=1}^{\teta_1}a_{is}c_{js})\alpha_i=\sum_{j=1}^{p_1}(\beta_j-s_i(\beta_j))=2(\rho_J-s_i(\rho_J))$$ Since $s_i(\rho_J)=\rho_J-\alpha_i$ one gets that $\sum_{j=1}^{p_1}
\sum_{s=1}^{\teta_1}a_{is}c_{js}=2$.
\end{proof}
\section{Appendix}\label{Ap}

For $n \in \mathbb{N}$ and $q \neq 0$, let $(n)_q=1+q+\cdots+q^{n-1}$ for $n\geq 1$ and $(0)_q=1$. Define $(n)_q!=(1)_q(2)_q\cdots (n)_q$ and let  $${n\choose i}_q=\frac{(n)_q!}{(i)_q!(n-i)_q!}$$ be the quantum binomial coefficients.

Note that if $q \neq 0$ then
\begin{equation}\label{cominv}{n \choose k}_{q^{-1}}={n \choose k}_{q}q^{k(k-n)}\end{equation}
for all $0 \leq k \leq n$. The proof of this is deduced from the equalities: $(n)_{q^{-1}}=q^{-(n-1)}(n)_{q}$ and $(n)_{q^{-1}}!=q^{-n(n-1)/2}(n)_{q}!$.

If $ba=qba$ then $$(a+b)^n=\sum_{i=0}^n{n\choose i}_qa^ib^{n-i}$$ for all $n \in \mathbb{N}$.

Let $A$ be a finite dimensional Hopf algebra. Suppose $x, \; y \in A$ such that $\D(x)=x\ot 1+a\ot x$, $\D(y)=y\ot 1+ b\ot y$ where $a,\;b\in G:=G(A)$. Moreover, suppose that $gxg^{-1}=\chi(g)x$ and $gyg^{-1}=\mu(g)y$ for all $g \in G$ where $\chi, \mu \in \widehat{G}$. Let $z_N=ad(x)^N(y)$. Then \begin{equation}\label{exp}z_N=\sum_{i=0}^N(-1)^i{N\choose i}_{\chi(a)}\chi(a)^{i(i-1)/2}\mu(a)^ix^{N-i}yx^i\end{equation}
The proof of the above formula is by induction on $N$. One has $$z_1=ad(x)(y)=xy-ayS(x)=xy-aya^{-1}x=xy-\mu(a)yx$$ and $$z_{N+1}=ad(x)(z_N)=xz_N-az_Na^{-1}x=xz_{N}-\chi^N\mu(a)z_Nx$$ Thus
\begin{eqnarray*}z_{N+1} \!  & = &  \! \sum_{i=0}^N(-1)^i{N\choose i}_{\chi(a)}\chi(a)^{i(i-1)/2}\mu(a)^ix^{N-i+1}yx^i-
\\ & - &  \sum_{i=0}^N(-1)^i{N\choose i}_{\chi(a)}\chi(a)^{i(i-1)/2}\mu(a)^ix^{N-i}yx^{i+1}\chi(a)^{N}\mu(a)
\\ & = & x^{N+1}y+(-1)^{N+1}\chi(a)^{N(N+1)/2}\mu(a)^Nyx^{N+1}+
\\ & + & \sum_{i=1}^N(-1)^i[{N\choose i}_{\chi(a)}\chi(a)^{i(i-1)/2}
\mu(a)^i+
\\ & + & {N\choose i-1}_{\chi(a)}\chi(a)^{(i-2)(i-1)/2}\mu(a)^{i-1}\chi(a)^{N}\mu(a)]x^{N+1-i}yx^{i}
\\ & = & \sum_{i=0}^{N+1}(-1)^i{N+1\choose i}_{\chi(a)}\chi(a)^{i(i-1)/2}\mu(a)^ix^{N+1-i}yx^i\end{eqnarray*}
since
$${N\choose i}_{\chi(a)}+{N\choose i-1}_{\chi(a)}\chi(a)^{N-i+1}={N+1\choose i}_{\chi(a)}$$

We see that $z_N$ has the same formula as in \cite{AS2}, formula A.8, pp.33. In Lemma A.1, pp. 33 of the same paper it is proved that if $\chi(b)\mu(a)=\chi^{1-r}(a)$ and $z_r=\sum_{i=0}^r\alpha_ix^iyx^{r-i}$ then $\alpha_i$ satisfy the following system:
\begin{equation}\label{s1}\sum_{l \leq i \leq r-h } \alpha_i{i\choose l}_{\chi(a)}{r-i\choose h}_{\chi(a)}\mu(a)^{i-l}\chi(a)^{h(i-l)}=0\end{equation}
\begin{equation}\label{s2}\sum_{u \leq i \leq r-v}\alpha_i{i\choose u}_{\chi(a)}{r-i\choose v}_{\chi(a)}\chi(b)^{r-i-v}\chi(a)^{u(r-i-v)}=0\end{equation}
\vskip -0.3cm
The following lemma and its proof is similar to the Lemma A.1 from \cite{AS2}.
\vskip -0.3cm
\begin{lemma}
Let $A$ be a finite dimensional Hopf algebra.
Suppose $x, \; y \in A$ such that $\D(x)=x\ot 1+a\ot x$, $\D(y)=y\ot 1+ b\ot y$ where $a,\;b\in G:=G(A)$ and $ab=ba$. Moreover, suppose that $gxg^{-1}=\chi(g)x$ and $gyg^{-1}=\mu(g)y$ for all $g \in G$ where $\chi, \mu \in \widehat{G}$. Assume that $\chi(b)\mu(a)=\chi^{1-r}(a)$ for some $r \geq 0$ and let $z=ad(x)^{1-r}(y)$. Then $z$ is a skew primitive element of $A$, $\D(z)=z\ot 1+a^{1-r}b\ot z$.
\end{lemma}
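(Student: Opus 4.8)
The plan is to compute $\D(z)$ directly from an explicit expansion of $z$ and show that all intermediate terms cancel, the hypothesis on $\chi(b)\mu(a)$ being precisely what makes this happen; this follows the pattern of Lemma A.1 of \cite{AS2} and reuses the systems \ref{s1} and \ref{s2} recorded above. Write $N$ for the exponent occurring in $z=ad(x)^{N}(y)$ (so $N=1-r$). First I would collect the elementary tools: from $\D(x)=x\ot1+a\ot x$ one gets $S(x)=-a^{-1}x$, hence $ad(x)(w)=xw-(awa^{-1})x$ for every $w\in A$; from the weight hypotheses one gets $xg=\chi(g)^{-1}gx$ and $gwg^{-1}=\chi(g)^{s}\mu(g)\,w$ for a word $w$ of $x$-degree $s$ and $y$-degree $1$, so in particular $aza^{-1}=\chi(a)^{N}\mu(a)\,z$; from the quantum binomial theorem in the Appendix, since $a\ot x$ and $x\ot1$ $q$-commute with parameter $\chi(a)^{-1}$, one gets $\D(x^{m})=\sum_{k}\binom{m}{k}_{\chi(a)^{-1}}a^{k}x^{m-k}\ot x^{k}$; and from formula \ref{exp} one has $z=\sum_{i}\alpha_{i}\,x^{N-i}yx^{i}$ with $\alpha_i$ the displayed $q$-binomial coefficients.

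The core of the argument is to expand $\D(z)=\sum_{i}\alpha_{i}\,\D(x)^{N-i}\D(y)\D(x)^{i}$ and sort the terms by how the lone $y$ and the powers of $x$ distribute between the two tensor factors. The terms whose right factor contains no $y$ reassemble to $z\ot1$; the terms whose right factor contains the $y$ and carries no trailing group element reassemble to $a^{N}b\ot z$, once one normal-orders the mismatched words (using $yx=\mu(a)^{-1}(xy-ad(x)(y))$ and, iterating, that $x^{j}yx^{k}$ rewrites in the PBW basis as a combination of the $x^{j+k-\ell}yx^{\ell}$, which reintroduces the lower commutators $ad(x)^{<N}(y)$); and what remains is a ``middle'' sum $\sum_{s\ge1}c_{s}\,a^{N-s}b\,x^{s}\ot ad(x)^{N-s}(y)$ whose coefficients $c_{s}$, after the Gauss and $q$-Vandermonde identities for $\binom{m}{k}_{q}$ collapse the $\alpha_{i}$-sums, are (up to units) exactly the left-hand sides of \ref{s1} and \ref{s2}; these vanish by the computation quoted from Lemma A.1 of \cite{AS2} once $\chi(b)\mu(a)=\chi^{1-r}(a)$ is imposed, and the proof is complete. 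An equivalent organization is an induction on $N$ via $ad(x)^{N}(y)=ad(x)\bigl(ad(x)^{N-1}(y)\bigr)$: one first proves, with no restriction on $\chi(b)\mu(a)$, the closed form $\D(ad(x)^{N}(y))=ad(x)^{N}(y)\ot1+a^{N}b\ot ad(x)^{N}(y)+\sum_{s=1}^{N}c_{N,s}\,a^{N-s}b\,x^{s}\ot ad(x)^{N-s}(y)$, where $c_{N,s}$ satisfies a $q$-binomial recursion, and then checks that every $c_{N,s}$ with $s\ge1$ carries a linear factor $\chi(a)^{1-N}\chi(b)^{-1}-\chi(a)^{t}\mu(a)$ that the hypothesis kills.

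I expect the genuine obstacle to be exactly this $q$-binomial accounting: keeping straight which power of $a$, together with the single $b$, trails each monomial in $\D(z)$, normal-ordering the mismatched words $x^{j}yx^{k}$, and then recognizing the resulting coefficient sums as the expressions in \ref{s1} and \ref{s2} that are already known to vanish; the rest is routine, and in particular the grouplike part $a^{N}b$ is forced by the weight $aza^{-1}=\chi(a)^{N}\mu(a)z$ of $z$ together with coassociativity. A slicker but essentially equivalent route, if one wishes to avoid the explicit $\alpha_{i}$, is to note that on the subalgebra $R$ generated by $x$ and $y$ the ordinary adjoint action coincides with the braided one, since $awa^{-1}$ is just the $G$-action on $w\in R$; thus $z=ad_{c}(x)^{N}(y)$ is an iterated braided commutator in the braided Hopf algebra $R\subseteq A^{\;co\;k\langle a,b\rangle}$, in which $x$ and $y$ are braided-primitive, and the claim follows from the known formula for the braided coproduct of such a commutator together with the standard translation of braided-primitivity in $R$ into skew-primitivity in $R\# k\langle a,b\rangle$, the trailing grouplike being the $\langle a,b\rangle$-degree $a^{N}b$ of $z$.
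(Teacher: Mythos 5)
Your proposal is correct and follows essentially the same route as the paper: expand $\D(z)$ using the explicit form $z=\sum_u\alpha_u x^uyx^{r-u}$ from formula \ref{exp} together with $\D(x^n)=\sum_i{n\choose i}_q x^ia^{n-i}\ot x^{n-i}$, identify the extreme terms as $z\ot 1$ and $a^{1-r}b\ot z$, and kill the cross terms by recognizing their coefficients as the left-hand sides of the systems \ref{s1} and \ref{s2} from Lemma A.1 of \cite{AS2}. The only cosmetic difference is that in the paper's bookkeeping the $a^{1-r}b\ot z$ block arises directly from the terms with $i=j=0$ (no normal-ordering of mismatched words $x^jyx^k$ is needed); otherwise the two arguments coincide.
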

\begin{proof}
It can be shown that $z=\sum_{u=1}^{r}\alpha_ux^uyx^{r-u}$ where $\alpha_u$ are the scalars corresponding to the formula \ref{exp} and they are the same as in \cite{AS2}, Lemma A.1. One has $\D(x^n)=\sum_{i=0}^n{n\choose i}_qx^ia^{n-i}\ot x^{n-i}$ for all $n\geq 0$, where $q=\chi(a)$. Thus
\begin{eqnarray*}\D(z) \! & = & \! \sum_{u=0}^r\alpha_u(\sum_{i=0}^u{u\choose i}_qx^ia^{u-i}\ot x^{u-i})\times (y\ot1+b\ot y)\times \\ & \times  & (\sum_{j=0}^{r-u}{r-u\choose j}_qx^ja^{r-u-j}\ot x^{r-u-j})=
\\ & = &
\sum_{u=0}^r\sum_{i=0}^u\sum_{j=0}^{r-u}\alpha_u{u\choose i}_q{r-u\choose j}_qx^ia^{u-i}yx^ja^{r-u-j}\ot x^{u-i}x^{r-u-j}+
\\ & + & \sum_{u=0}^r\sum_{i=0}^u\sum_{j=0}^{r-u}\alpha_u{u\choose i}_q{r-u\choose j}_qx^ia^{u-i}bx^ja^{r-u-j}\ot x^{u-i}yx^{r-u-j}=
\\ & = & \sum_{u=0}^r\sum_{i=0}^u\sum_{j=0}^{r-u}\alpha_u{u\choose i}_q{r-u\choose j}_q\mu^{u-i}(a)\chi^j(a^{u-i})
x^iyx^{j}a^{r-i-j}\ot x^{r-i-j}+
\\ & + & \sum_{u=0}^r\sum_{i=0}^u\sum_{j=0}^{r-u}\alpha_u{u\choose i}_q{r-u\choose j}_q\chi^j(a^{u-i}b)
x^{i+j}a^{r-i-j}b\ot x^{u-i}yx^{r-u-j}=
\\ & = & \sum_{i+j=r}\alpha_ix^iyx^j\ot 1+
\\ & + & \sum_{0\leq i+j<r}(\sum_{i \leq u \leq r-j}\alpha_u{u\choose i}_q{r-u\choose j}_q
\mu^{u-i}(a)\chi^j(a^{u-i})\;)x^iyx^ja^{r-i-j}\ot x^{r-i-j}+
\\ & + & \sum_{u=0}^r\alpha_ua^rb\ot x^uyx^{r-u} +
\\ & + & \sum_{0 <i+j \leq r}(\sum_{i\leq u \leq r-j}\alpha_u{u\choose i}_q{r-u\choose j}_q\chi^j(a^{u-i}b)
\;)x^{i+j}a^{r-i-j}b\ot x^{u-i}yx^{r-u-j}=
\\ &= & z\ot 1+a^rb \ot z\end{eqnarray*}
The last equality is true since the first term of the above sum is $z\ot 1$ and the third term is $a^rb\ot z$. By  formula \ref{s1} for each $i+j <r$ the coefficient of the $x^iyx^ja^{r-i-j}\ot x^{r-i-j}$ in the second term is zero. Similarly, using that ${u \choose i}_q={u \choose u-i}_q$,  formula \ref{s2} implies that the coefficient of $x^{i+j}a^{r-i-j}b\ot x^{u-i}yx^{r-u-j}$ in the last term is zero.
\end{proof}
\begin{cor}\label{prim}
With the notations from Section \ref{dual}, the element $z=ad_{_{A^*}}(\xi_i)^{1-a_{ij}}(\xi_j)$ is skew primitive in $A^*$: $$\D(z)=z \ot 1+ \chi_i^{1-a_{ij}}\chi_j \ot z$$ for all $1 \leq i,\;j \leq \theta$.
\end{cor}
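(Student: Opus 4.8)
The plan is to obtain the corollary as a direct application of the Appendix Lemma just established, taken for the finite dimensional Hopf algebra $A^{*}$ in place of $A$. Fix $i\neq j$ and set $x=\xi_{i}$, $y=\xi_{j}$. From Proposition \ref{part*} one reads off all the data the Lemma needs: $\D(\xi_{i})=\xi_{i}\ot 1+\chi_{i}\ot\xi_{i}$ and $\D(\xi_{j})=\xi_{j}\ot 1+\chi_{j}\ot\xi_{j}$, so the grouplike elements attached to $x$ and $y$ are $a=\chi_{i}$ and $b=\chi_{j}$; these lie in $G(A^{*})$ (they are grouplike by the remark in the proof of Proposition \ref{part*}), and they commute since $\widehat{G}$ is abelian. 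Proposition \ref{part*}(2) gives $\gamma\xi_{i}\gamma^{-1}=\gamma(g_{i})\xi_{i}$ and $\gamma\xi_{j}\gamma^{-1}=\gamma(g_{j})\xi_{j}$ for every $\gamma\in G(A^{*})$, so the characters of $G(A^{*})$ required by the Lemma are $\chi=\hat{g_{i}}$ (that is, $\gamma\mapsto\gamma(g_{i})$) and $\mu=\hat{g_{j}}$. Finally I would take $r=1-a_{ij}$, which is $\geq 1$ because $i\neq j$, so that $z=ad_{A^{*}}(\xi_{i})^{1-a_{ij}}(\xi_{j})$ is exactly $ad(x)^{r}(y)$.

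The only hypothesis that is not bookkeeping is the numerical compatibility $\chi(b)\mu(a)=\chi^{1-r}(a)$. I would verify it by the computation $\chi(b)\mu(a)=\chi(\chi_{j})\mu(\chi_{i})=\chi_{j}(g_{i})\,\chi_{i}(g_{j})$ on one side, and $\chi^{1-r}(a)=\bigl(\chi(\chi_{i})\bigr)^{1-r}=\chi_{i}(g_{i})^{\,1-r}=\chi_{i}(g_{i})^{\,a_{ij}}$ on the other, using $1-r=a_{ij}$. Hence the required identity is precisely the Cartan condition $\chi_{j}(g_{i})\chi_{i}(g_{j})=\chi_{i}(g_{i})^{a_{ij}}$ of the datum $\mathcal{D}$, which holds by assumption. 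This is the substantive point: the associated dual datum $\widetilde{\mathcal{D}}$ satisfies the Cartan condition because $\mathcal{D}$ does, and it is exactly this condition that makes the iterated braided commutator of the $\xi$'s skew primitive.

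With all hypotheses checked, the Appendix Lemma gives that $z$ is skew primitive in $A^{*}$ with $\D(z)=z\ot 1+a^{r}b\ot z$; substituting $a=\chi_{i}$, $b=\chi_{j}$, $r=1-a_{ij}$ yields $\D(z)=z\ot 1+\chi_{i}^{1-a_{ij}}\chi_{j}\ot z$, which is the assertion. One point is worth flagging when carrying this out: the Lemma must be read so that the element produced is $ad(x)^{r}(y)$ with grouplike part $a^{r}b$ — this is what its proof actually yields, the term $a^{r}b\ot z$ being exhibited there — so that the exponent matches $1-a_{ij}$ once $1-r=a_{ij}$. The main obstacle is therefore not conceptual but clerical: keeping straight to which of $G(A^{*})$ and $\widehat{G(A^{*})}$ each symbol belongs under the dictionary $x=\xi_{i}$, $y=\xi_{j}$, $a=\chi_{i}$, $b=\chi_{j}$, $\chi=\hat{g_{i}}$, $\mu=\hat{g_{j}}$, $r=1-a_{ij}$; once that is fixed, the proof collapses to the Cartan identity and a substitution.
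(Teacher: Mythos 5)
Your proposal is correct and is exactly the intended derivation: the paper states Corollary \ref{prim} as an immediate application of the Appendix Lemma with $x=\xi_i$, $y=\xi_j$, $a=\chi_i$, $b=\chi_j$, $\chi=\hat{g_i}$, $\mu=\hat{g_j}$ and $r=1-a_{ij}$, the compatibility $\chi(b)\mu(a)=\chi^{1-r}(a)$ reducing to the Cartan condition of the datum. Your flagging of the exponent bookkeeping (the lemma's proof actually produces $ad(x)^r(y)$ with grouplike part $a^rb$) is the right reading of the lemma as used in the paper.
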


\begin{prop}\label{newserre}With the notations from Section \ref{qdouble} one has that\\ $ ad_{_{D(A)}}(\xi_i\ch^{-1})^{1-a_{ij}}(\xi_j\ch_j^{-1})=0$  for all $1 \leq i,\;j \leq \theta$.
\end{prop}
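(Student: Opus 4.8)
The plan is to mimic the proof of Proposition \ref{part*}(4), replacing $A^*$ by $D(A)$ and $\xi_i$ by $\eta_i:=\xi_i\ch_i^{-1}$; so I set $z:=\ad_{D(A)}(\xi_i\ch_i^{-1})^{1-a_{ij}}(\xi_j\ch_j^{-1})\in D(A)$ and must prove $z=0$.

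The first step is to show that $z$ is skew primitive in $D(A)$. From Propositions \ref{defning} and \ref{part*} one has in $D(A)$ that $\D(\eta_i)=\eta_i\ot 1+\ch_i^{-1}\ot\eta_i$ with $\ch_i^{-1}\in G(D(A))$, that $\ch_i^{-1}$ and $\ch_j^{-1}$ commute (they lie in $\widehat{G}\subseteq G\times\widehat{G}$), and that $(g\ch)\eta_i(g\ch)^{-1}=\langle\ch_i^{-1}\hat{g_i},\,g\ch\rangle\eta_i$ for all $g\in G,\ \ch\in\widehat{G}$. Thus $D(A),\eta_i,\eta_j$ are in the situation of the Lemma of the Appendix preceding Corollary \ref{prim}; via the identification $\widehat{G\times\widehat{G}}=\widehat{G}\times G$ its numerical hypothesis reduces here exactly to the Cartan identity of the datum $\mc{D}'$ for the indices $\theta+i,\theta+j$, which holds since $\mc{D}'$ is a datum of finite Cartan type. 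That Lemma then yields $\D(z)=z\ot 1+\rho\ot z$ with $\rho=\ch_i^{-(1-a_{ij})}\ch_j^{-1}\in\widehat{G}=G(A^*)$.

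Next I would observe where $z$ lives: it is built from $\eta_i,\eta_j$ by multiplication and the adjoint action, and these elements lie in the standard Hopf subalgebra $A^{*\,cop}\ot 1$ of $D(A)$ (the copy of $A^*$ with opposite comultiplication), which is stable under the adjoint action of its own elements; hence $z\in A^{*\,cop}\ot 1$ and I regard it as an element $\zeta\in A^*$. Because of the opposite comultiplication, the skew primitivity of $z$ reads $\zeta(ab)=\zeta(a)\rho(b)+\eps(a)\zeta(b)$ for all $a,b\in A$ (with $\eps$ the unit of $A^*$) and $\zeta(1)=0$. Now $\eta_\ell(g)=\xi_\ell(g)\ch_\ell^{-1}(g)=0$ for $g\in G$, and since $g$ is group-like every monomial in the $\eta$'s vanishes at $g$, so $\zeta$ vanishes on $G$. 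Combining this with $\eps(x_m)=0$, with the vanishing of $\rho$ on $\mc{I}$ and on every $y_{\ul u}h$ with $\ul u\neq 0$, and with the fact that each root vector is an iterated braided commutator of the $x_i$'s — hence a linear combination of products of $x_i$'s, of length one only when it equals some $x_m$ — the skew primitivity relation shows that $\zeta$ annihilates every PBW basis element of $A$ except possibly the $x_m$; so it remains only to prove $\zeta(x_m)=0$ for $1\le m\le\theta$.

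For that last point I would write $z=\ad_{D(A)}(\eta_i)(f')$ with $f'=\ad_{D(A)}(\eta_i)^{-a_{ij}}(\eta_j)\in A^{*\,cop}\ot 1$, which again satisfies $\eps(f')=f'(1)=0$ and $f'(g_m)=0$, and then evaluate $\big(\ad(F)(F')\big)(x_m)=\big(F_{[1]}F'S(F_{[2]})\big)(x_m)$ with the comultiplication and antipode of this copy of $A^*$ (that is, $\D_{A^*}^{cop}$ and $S_{A^*}^{-1}$), using $\D^2(x_m)=x_m\ot 1\ot 1+g_m\ot x_m\ot 1+g_m\ot g_m\ot x_m$. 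Exactly as in the proof of Proposition \ref{part*}(4), the three resulting terms are proportional respectively to $f'(1)$, to $\eta_i(g_m)$ (after collapsing $F_{[2]}$ by the counit), and to $f'(g_m)$, so they all vanish; hence $\zeta(x_m)=0$ for all $m$ and $z=0$. The only thing demanding care is precisely this bookkeeping of Hopf structures: the copy of $A^*$ inside $D(A)$ has the opposite comultiplication, so one must use $\D_{A^*}(\eta_i)=\eta_i\ot\ch_i^{-1}+1\ot\eta_i$ and $S_{A^*}^{-1}(\eta_i)=-\ch_i\eta_i$ (both obtained from Proposition \ref{part*}(1)) and re-check that the evaluation against $A$ collapses just as in that proof; apart from that and the routine verification of the Cartan identity for $\mc{D}'$, the argument is the verbatim analogue of the proof of Proposition \ref{part*}(4).
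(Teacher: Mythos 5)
Your proof is correct, but it takes a genuinely different route from the paper's. The paper proves Proposition \ref{newserre} by direct computation: it expands $ad_{D(A)}(\xi_i\ch_i^{-1})^{1-a_{ij}}(\xi_j\ch_j^{-1})$ via formula \ref{exp}, pushes all the group-like factors $\ch_i^{-1},\ch_j^{-1}$ to one side using the commutation rules \ref{needed} together with the inversion formula \ref{cominv} for quantum binomials, and recognizes the surviving sum as a group-like multiple of $ad_{A^*}(\xi_i)^{1-a_{ij}}(\xi_j)$, which vanishes by Proposition \ref{part*}(4). You instead transplant the strategy of the proof of Proposition \ref{part*}(4) itself into $D(A)$: the Appendix lemma, whose numerical hypothesis you correctly reduce to the Cartan condition of $\mc{D}'$ in the indices $\theta+i,\theta+j$ (equivalently, the inverse of the Cartan condition for $\mc{D}$, since $\chi=\ch_i^{-1}\hat{g_i}$ gives $\chi(a)=\ch_i(g_i)^{-1}$, etc.), shows that $z$ is skew primitive in $D(A)$ with group-like $\ch_i^{-(1-a_{ij})}\ch_j^{-1}$; since $z$ lies in the copy of $A^{*\,cop}$ inside $D(A)$, it becomes a functional on $A$ that skew primitivity kills on every PBW basis element except possibly the $x_m$, and there the three-term evaluation against $\D^2(x_m)$ finishes the argument exactly as in Proposition \ref{part*}(4) (the antipode term never contributes, being paired against $f'(g_m)=0$, so the precise form of $S_{D(A)}$ on the copy of $A^*$ is harmless). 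Your version avoids the quantum-binomial bookkeeping and makes the conceptual reason for the vanishing transparent; the paper's version has the side benefit of exhibiting the element explicitly as a group-like twist of the already-established Serre relation in $A^*$. The one point that genuinely required the care you gave it is the $cop$ bookkeeping --- the comultiplication of $D(A)$ restricted to the copy of $A^*$ is reversed, so the functional identity reads $\zeta(ab)=\zeta(a)\rho(b)+\eps(a)\zeta(b)$ rather than the form appearing in Proposition \ref{part*}(1) --- and you handled it correctly.
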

\begin{proof}
Note that Proposition \ref{part*} implies that the relations
\begin{equation}\label{needed}\chi\xi_i^s\ch^{-1}=\ch^s(g_i)\xi_i,\;\;(\xi_i\ch)^n=\ch(g_i)^{\frac{n(n-1)}{2}}\xi_i^n\ch^n\end{equation}
hold in $A^*$ for all $\ch \in G(A^*)$, $1\leq i \leq \teta$ and any $n,\;s \geq 0$.

Using formula \ref{exp} the relation $ad_{_{A^*}}(\xi_i)^{1-a_{ij}}(\xi_j)=0$ can be written as:
 $$ \sum_{s=0}^N(-1)^i{N\choose s}_{\chi_i(g_i)}\chi_i(g_i)^{s(s-1)/2}\ch_j(g_i)^s\xi_i^{N-s}\xi_j\xi_i^s=0$$ where $N=1-a_{ij}$. Since $\D_{A^*}(\xi_i)=\xi_i\ot1+\chi_i\ot\xi_i$ one has $\D_{D(A)^*}(\xi_i)=1\ot\xi_i+\xi_i\ot\chi_i$ and thus $\D_{D(A)}(\xi_i\ch_i^{-1})=\ch_i^{-1}\ot \xi_i\ch_i^{-1}+\xi_i\ch_i^{-1}\ot 1$.

Using again formula \ref{exp} one has

\begin{eqnarray*}\! & & \!ad_{_{D(A)}}(\xi_i\ch^{-1})^N (\xi_j\ch_j^{-1})=  \sum_{s=0}^N(-1)^s{N\choose s}_{\chi_i^{-1}(g_i)}\chi_i(g_i)^{-s(s-1)/2}{\ch_i^{-1}(g_j)}^s \cdot\\ & & \cdot{(\xi_i{\ch_i}^{-1})}^{N-s}\xi_j\ch_j^{-1}{(\xi_i\ch_i^{-1})}^s=
\\ & = & \sum_{s=0}^N(-1)^s {N\choose s}_{\chi_i^{-1}(g_i)}\chi_i(g_i)^{-s(s-1)/2}{\ch_i^{-1}(g_j)}^s\cdot
\\ & & \cdot\xi_i^{N-s}\chi_i^{-(N-s)}\xi_j\chi_j^{-1}\xi_i^{s}\ch_i^{-s}{\chi_i^{-1}(g_i)}^{(N-s)(N-s-1)/2+s(s-1)/2}=
\\ & = & \chi_i(g_i)^{(N-N^2)/2}\sum_{s=0}^N(-1)^s {N\choose s}_{\chi_i^{-1}(g_i)}\chi_i(g_i)^{-s(s-1)/2}{\ch_i^{-1}(g_j)}^s\cdot\\ & & \cdot \xi_i^{N-s}\chi_i^{-(N-s)}\xi_j\chi_j^{-1}\xi_i^{s}\ch_i^{-s} \chi_i(g_i)^{-s(s-N)} =\end{eqnarray*}
\begin{eqnarray*}
\\ \!& = & \! \chi_i(g_i)^{(N-N^2)/2}\sum_{s=0}^N(-1)^s{N \choose s}_{\ch_i(g_i)}\ch_i(g_i)^{s(s-n)}\chi_i(g_i)^{-s(s-1)/2}{\ch_i^{-1}(g_j)}^s\cdot \\& & \cdot\xi_i^{N-s}\xi_j\chi_i^{-(N-s)}\xi_i^{s}\chi_j^{-1}\ch_i^{-s} \chi_i(g_j)^{-(N-s)}\ch_j(g_i)^{-s}\chi_i(g_i)^{-s(s-N)}=
\\ & = &   \chi_i(g_j)^{-N}\chi_i(g_i)^{(N-N^2)/2}\sum_{s=0}^N(-1)^s{N \choose s}_{\ch_i(g_i)}\chi_i(g_j)^s\chi_i(g_i)^{-s(s-1)/2}{\ch_i^{-1}(g_j)}^s\cdot \\ & & \cdot\xi_i^{N-s}\xi_j\xi_i^{s} \chi_j(g_i)^{-s}\chi_i(g_i)^{-(N-s)s}\chi_j^{-1}\ch_i^{-N}=
\\ &  = & \chi_j^{-1}\ch_i^{-N}\chi_i(g_j)^{-N} \chi_i(g_i)^{(N-N^2)/2}\sum_{s=0}^N(-1)^s{N \choose s}_{\ch_i(g_i)}\chi_i(g_i)^{-s(\frac{s-1}{2}+N-s)}\chi_j(g_i)^{-s}\cdot \\ & & \cdot \xi_i^{N-s}\xi_j\xi_i^{s}=
\\ & = &  \chi_j^{-1}\ch_i^{-N}\chi_i(g_j)^{-N}\chi_i(g_i)^{(N-N^2)/2}\sum_{s=0}^N(-1)^s{N \choose s}_{\ch_i(g_i)}\chi_i(g_i)^{-s\frac{2N-s-1}{2}}\chi_j(g_i)^{-s}\cdot \\ & & \cdot \xi_i^{N-s}\xi_j\xi_i^{s}=
\\ & = &  \chi_j^{-1}\ch_i^{-N}\chi_i(g_j)^{-N}\chi_i(g_i)^{(N-N^2)/2}\sum_{s=0}^N(-1)^s{N \choose s}_{\ch_i(g_i)}\chi_i(g_i)^{s(s-1)/2}\chi_i(g_i)^{-Ns+s}\chi_j(g_i)^{-s}\cdot \\ & & \cdot \xi_i^{N-s}\xi_j\xi_i^{s}\end{eqnarray*}

If $N=1-a_{ij}$ then $-Ns+s=a_{ij}s$ and $\ch_i(g_i)^{sa_{ij}}\ch_j(g_i)^{-s}=\ch_i(g_j)^s$.

Thus
\begin{eqnarray*}& \! \! & ad(\xi_i\ch^{-1})^{1-a_{ij}}  (\xi_j\ch_j^{-1}) = \chi_j^{-1}\ch_i^{-N}\chi_i(g_j)^{-(1-a_{ij})}\chi_i(g_i)^{(N-N^2)/2}\cdot
\\  & & \cdot \sum_{s=0}^N(-1)^s{N \choose s}_{\ch_i(g_i)}\chi_i(g_i)^{s(s-1)/2}\ch_i(g_j)^s\xi_i^{N-s}\xi_j\xi_i^{s}=
\\ & = & \chi_j^{-1}\ch_i^{-N}\chi_i(g_j)^{-(1-a_{ij})}\chi_i(g_i)^{(N-N^2)/2} ad_{_{A^*}}^{1-a_{ij}}(\xi_i)(\xi_j)=0\end{eqnarray*}

Formula \ref{cominv} for $q=\ch_i(g_i)$ and relations \ref{needed} were used in the above computations.
\end{proof}

\subsection*{Acknowledgments} The author thanks to N. Andruskiewitsch for suggesting the general treatment from Section \ref{braided} and for the useful comments on the paper.
\bibliographystyle{amsplain}
\bibliography{ribbonelements}

\end{document}